\newtheorem{theorem}{Theorem}[section]
\newtheorem{lemma}{Lemma}[section]
\newtheorem{definition}{Definition}[section]
\begin{document}

\title{Parameter-robust Braess-Sarazin-type  smoothers for  linear elasticity problems}
\author{Yunhui He\thanks{Department of Computer Science, The University of British Columbia, Vancouver, BC, V6T 1Z4, Canada,   \tt{yunhui.he@ubc.ca}.}\quad \quad \quad  Yu Li\thanks{Coordinated Innovation Center for Computable Modeling in Management Science, 
Tianjin University of Finance and Economics, No. 25 Zhujiang Rd., Hexi District, Tianjin 300222, China. This work is supported in part by the Scientific Research Plan of Tianjin Municipal Education Committee (2017KJ236).
\tt{liyu@tjufe,edu.cn}.}}

\maketitle
 \begin{abstract}
 In this work, we propose three  Braess-Sarazin-type multigrid relaxation schemes for solving linear elasticity problems, where the marker and cell scheme, a finite difference method, is used for the discretization.  The three relaxation schemes are Jacobi-Braess-Sarazin, Mass-Braess-Sarazin, and Vanka-Braess-Sarazin.   A local Fourier analysis (LFA) for the block-structured relaxation schemes is presented to study multigrid convergence behavior.   From LFA, we derive  optimal  LFA smoothing factor for each case. We obtain highly efficient smoothing factors, which are independent of Lam\'{e} constants. Vanka-Braess-Sarazin relaxation scheme leads to the most efficient one. In each relaxation, a Schur complement system needs to be solved. Due to the fact that direct solve is often expensive, an inexact version is developed,  where we simply use at most three weighted Jacobi iterations on the Schur complement system.  Finally, two-grid and V-cycle multigrid performances are presented to validate our theoretical results. Our numerical results show that inexact versions can achieve the same performance as that of exact versions and our methods are robust to the Lam\'{e} constants.
 
\end{abstract}

\vskip0.3cm {\bf Keywords.}
Linear elasticity,   staggered grids, multigrid, local Fourier analysis, Braess-Sarazin relaxation,  Vanka smoother
 
 2000 MSC: 65N55, 65F10,	 65N06
                   
 
 \section{Introduction}

We are interested in designing fast multigrid methods of numerical solution of the  following linear elasticity problem with homogeneous boundary condition  
 
\begin{eqnarray}
  - \epsilon \Delta\boldsymbol{u} -(\lambda+ \epsilon) \nabla (\nabla\cdot\boldsymbol{u}) &=&\boldsymbol{f}, \quad \text{in}\,\, \Omega \label{eq:elasticity-one}\\
 \boldsymbol{u}&=&0,\quad \text{on}\,\, \partial \Omega \label{eq:elasticity-bd}
\end{eqnarray}
where $\boldsymbol{u}=\begin{pmatrix} u \\ v \end{pmatrix}$ is the displacement, $\boldsymbol{f}=\begin{pmatrix} f^1 \\ f^2 \end{pmatrix}$ is the source term, and $\lambda=\frac{2\nu}{1-2\nu},\,  \epsilon >0$ are Lam\'{e} constants, in which $\nu$ is  the Poisson ratio.

When the material is nearly incompressible, i.e., the Poisson ratio  $\nu\rightarrow \frac{1}{2}$  (or $\lambda \rightarrow \infty$), the approximate solutions do not converge uniformly about $\lambda$. This phenomenon is called ‘‘Poisson locking’’. The popular way to  avoid the locking effects is  introducing the pressure $p=\lambda \nabla \cdot \boldsymbol{u}$ as an independent unknown. As a result, the linear elasticity equations \eqref{eq:elasticity-one} and \eqref{eq:elasticity-bd}  can be  rewritten as a mixed form:
\begin{eqnarray}
 - \epsilon \Delta\boldsymbol{u}  -\frac{\lambda+ \epsilon}{\lambda} \nabla p  &=&\boldsymbol{f}, \quad \text{in}\,\, \Omega \label{eq:uvp-laplace}\\
   \frac{\lambda+ \epsilon}{\lambda} \nabla \cdot \boldsymbol{u} -\frac{\lambda+ \epsilon}{\lambda^2}p&=&0, \quad \text{in}\,\, \Omega \label{eq:uvp-grad}\\
  \boldsymbol{u}&=&0,\quad \text{on}\,\, \partial \Omega \label{eq:elasticity-uv-bd}
\end{eqnarray}
Note that  \eqref{eq:uvp-grad} has been scaled by $\frac{\lambda+ \epsilon}{\lambda^2}$  to make the system symmetric. 

In the literature, finite element \cite{da2013virtual,nakshatrala2008finite,zhang1997analysis}  and finite difference \cite{da2010mimetic} methods have been applied to the linear elasticity problems. For example, Lee et al. \cite{lee2003locking} presented  a locking-free nonconforming finite element method for \eqref{eq:elasticity-one} and \eqref{eq:elasticity-bd}. Recently, a marker and cell scheme (MAC) for the mixed forms \eqref{eq:uvp-laplace}, \eqref{eq:uvp-grad}, and \eqref{eq:elasticity-uv-bd}, was proposed with stability and convergence analysis \cite{rui2018locking}.  Designing fast robust  solvers for the resulting discrete linear system is often challenging due to the  Lam\'{e} constants. An algebraic multigrid method for  linear elasticity problems was presented by Griebel et al. \cite{griebel2003algebraic}.  There are two main types of solvers for the discrete systems: preconditioning and multigrid. Preconditioning can be found in, for example,  Schwarz preconditioners  \cite{da2013isogeometric}, element-based preconditioner \cite{augarde2006element}.    Multigrid methods for discontinuous Galerkin discretizations of the  linear elasticity equations was presented by  Hong et al. \cite{hong2016robust},  where block Gauss–Seidel smoother is used.  Cai et al. \cite{cai1998first} proposed two first-order system least-squares approaches for the solution of the pure traction problem in planar linear elasticity. There are lots of  studies on numerical solvers \cite{baker2010improving,klawonn2004preconditioner,xiao2011robust}.

The work by Rui and Sun \cite{rui2018locking} focuses on the convergence order analysis of the MAC scheme for the linear elasticity problems. To the best of our knowledge, there seems no study of multigrid method for such discretization.  The contribution of this work is that we propose robust multigrid  methods with MAC scheme for  linear elasticity problems.  The proposed three block-structured  Braess Sarazj  relaxation  (BSR) schemes are Jacobi-BSR, Mass-BSR, and Vanka-BSR.  We apply local Fourier analysis (LFA) to help properly choose algorithmic parameters and derive optimal LFA smoothing factors for the proposed BSR-type relaxation schemes. Our findings show that the optimal smoothing factors are independent of  Lam\'{e} constants and meshsize. Our W-cycle and V-cycle multigrid methods are robust to  Lam\'{e} constants and meshsize, and are highly efficient. The quantitative convergence factors indicate that Vanka-BSR is the most efficient among three.

The rest of the work is organized as follows. In Section \ref{sec:discretization}, we introduce MAC scheme for the linear elasticity problem and   propose three block-structured Braess-Sarazin relaxation schemes. In Section \ref{sec:LFA}, we apply local Fourier analysis to study the smoothing, and  derive optimal smoothing factor for  the three block-structured schemes.  In Section \ref{sec:numerical-result}, we report numerical results to confirm our theoretical analysis. Finally, we draw conclusions in Section \ref{sec:conclusion}.
 
\section{Discretization and relaxation} \label{sec:discretization}
We now revisit the MAC scheme for the mixed   elasticity equations, \eqref{eq:uvp-laplace}, \eqref{eq:uvp-grad}, and \eqref{eq:elasticity-uv-bd}.  We carry out the discretization on  the uniform mesh of the unit domain $\Omega=[0,1]\times[0,1]$ with $h_{x}=h_{y}=h=1/N$,  and define the mesh points for the discrete unknowns as
\begin{equation*}
	(x_r, y_{\ell}) =( r h,  \ell h), \quad \text{where}\,\, r, \ell = 0, 1/2, 1,\dots, N-1, N-1/2, N.
\end{equation*}
Using  MAC scheme  \cite{rui2018locking},
the discrete unknowns, $u, v, p$,  in  \eqref{eq:uvp-laplace} and \eqref{eq:uvp-grad}, are placed at different positions on the grid:  the velocity $u$ is defined at the middle points of vertical edges ($\Box$), 
the velocity $v$ is defined at the middle points of horizontal edges ($\circ$), 
and the pressure $p$ is placed in the center of each cell ($\smallstar$), 
see Figure~\ref{fig:MAC-elasticity}. 

 \begin{figure}[!htp]
\centering
\includegraphics[width=0.5\textwidth]{./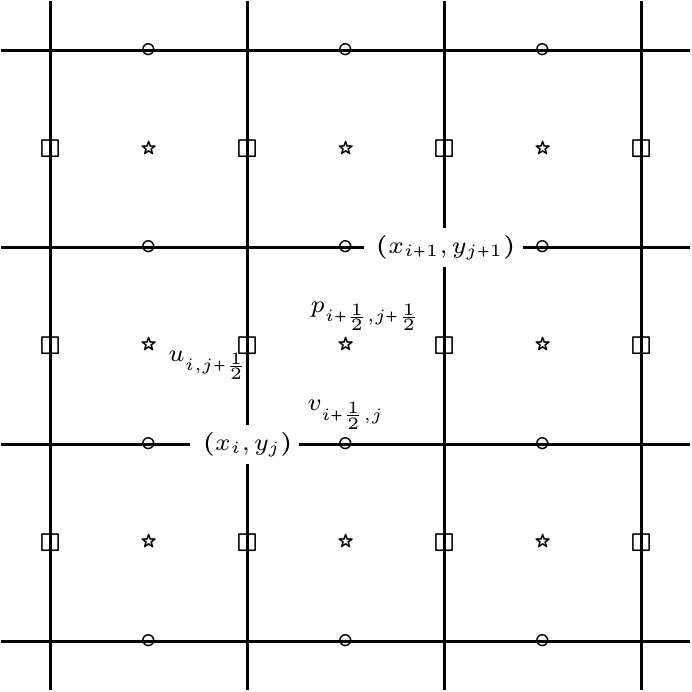}
 \caption{The staggered location of unknowns: $\Box-u,\,\, \circ-v,  \,\, \smallstar-p$.}\label{fig:MAC-elasticity}
\end{figure}

For simplicity, let $\tau =\frac{\lambda+ \epsilon}{\lambda}$. Then, the stencil representation of the MAC scheme for  \eqref{eq:uvp-laplace} and \eqref{eq:uvp-grad} is
\begin{equation}\label{eq:Lh-operator}
\mathcal{L}_h = \begin{pmatrix}
- \epsilon  \Delta_{h} & 0 & -\tau (\partial_x)_{h/2} \\
0 &  - \epsilon  \Delta_{h}  & -\tau (\partial_y)_{h/2} \\
\tau (\partial_x)_{h/2}  & \tau (\partial_y)_{h/2}  & -\tau/\lambda
\end{pmatrix},
\end{equation} 
where
\begin{equation}\label{eq:stencil-operator}
   -\Delta_{h} =\frac{1}{h^2}\begin{bmatrix}
      0 & -1 & 0\\
      -1 & 4 & -1 \\
      0 & -1 & 0
    \end{bmatrix},\quad
     (\partial_{x})_{h/2} =\frac{1}{h}\begin{bmatrix}
       -1 & 1 \\
    \end{bmatrix},\quad
     (\partial_{y})_{h/2} =\frac{1}{h}\begin{bmatrix}
       1 \\
      -1
    \end{bmatrix}.
\end{equation}
We now detail the MAC scheme to obtain the resulting linear system.  We set unknown matrices to be
\begin{equation*}
	U = \begin{pmatrix}
		u_{0,\frac{1}{2}} & u_{0,\frac{3}{2}} & \cdots & u_{0,\frac{2N-1}{2}}\\
		u_{1,\frac{1}{2}} & u_{1,\frac{3}{2}} & \cdots & u_{1,\frac{2N-1}{2}}\\
		\vdots & \vdots &        & \vdots \\
		u_{N,\frac{1}{2}} & u_{N,\frac{3}{2}} & \cdots & u_{N,\frac{2N-1}{2}}
	\end{pmatrix},\
	V = \begin{pmatrix}
		v_{\frac{1}{2},0} & v_{\frac{1}{2},1} & \cdots & v_{\frac{1}{2},N}\\
		v_{\frac{3}{2},0} & v_{\frac{3}{2},1} & \cdots & v_{\frac{3}{2},N}\\
		\vdots & \vdots &        & \vdots \\
		v_{\frac{2N-1}{2},0} & v_{\frac{2N-1}{2},1} & \cdots & v_{\frac{2N-1}{2},N}
	\end{pmatrix},
\end{equation*}
and
\begin{equation*}
	P = \begin{pmatrix}
		p_{\frac{1}{2},\frac{1}{2}} & p_{\frac{1}{2},\frac{3}{2}} & \cdots & p_{\frac{1}{2},\frac{2N-1}{2}}\\
		p_{\frac{3}{2},\frac{1}{2}} & p_{\frac{3}{2},\frac{3}{2}} & \cdots & p_{\frac{3}{2},\frac{2N-1}{2}}\\
		\vdots & \vdots &        & \vdots \\
		p_{\frac{2N-1}{2},\frac{1}{2}} & p_{\frac{2N-1}{2},\frac{3}{2}} & \cdots & p_{\frac{2N-1}{2},\frac{2N-1}{2}}\\
	\end{pmatrix},
\end{equation*}
where
$u_{i,j+\frac{1}{2}}$,
$v_{i+\frac{1}{2},j}$ and 
$p_{i+\frac{1}{2},j+{\frac{1}{2}}}$ with $i, j \in \mathbb{Z}$,
are used to approximate
$u(x_{i},y_{j+\frac{1}{2}})$,
$v(x_{i+\frac{1}{2}},y_{j})$ and
$p(x_{i+\frac{1}{2}},y_{j+\frac{1}{2}})$,
respectively.
Let  $S=(s_{ij}), T=(t_{ij})\in\mathbb{R}^{m\times n}$. We denote the sum over element wise multiplication of $S$ and $T$ as follows
\begin{equation*}
	S\odot T := \sum_{i=1}^m\sum_{j=1}^n s_{ij}t_{ij}.
\end{equation*}
For simplicity, let us treat the stencil operators in \eqref{eq:stencil-operator} as  matrices.  Then, the   discrete linear equations of \eqref{eq:uvp-laplace} and \eqref{eq:uvp-grad} are
\begin{eqnarray}
-\epsilon\Delta_{h} \odot
\begin{pmatrix}
u_{i-1,j+\frac{3}{2}} & u_{i,j+\frac{3}{2}} & u_{i+1,j+\frac{3}{2}} \\
u_{i-1,j+\frac{1}{2}} & u_{i,j+\frac{1}{2}} & u_{i+1,j+\frac{1}{2}} \\
u_{i-1,j-\frac{1}{2}} & u_{i,j-\frac{1}{2}} & u_{i+1,j-\frac{1}{2}}
\end{pmatrix}
-\tau (\partial_{x})_{h/2}\odot
\begin{pmatrix}
p_{i-\frac{1}{2},j+\frac{1}{2}} & p_{i+\frac{1}{2},j+\frac{1}{2}}
\end{pmatrix}
&=&f_{i,j+\frac{1}{2}}^1\label{eq:dis-moment1}\\
-\epsilon\Delta_{h} \odot
\begin{pmatrix}
v_{i-\frac{1}{2},j+1} & v_{i+\frac{1}{2},j} & v_{i+\frac{3}{2},j+1} \\
v_{i-\frac{1}{2},j} & v_{i+\frac{1}{2},j} & v_{i+\frac{3}{2},j} \\
v_{i-\frac{1}{2},j-1} & v_{i+\frac{1}{2},j} & v_{i+\frac{3}{2},j-1} \\
\end{pmatrix}
-\tau (\partial_{y})_{h/2}\odot
\begin{pmatrix}
	p_{i+\frac{1}{2},j+\frac{1}{2}} \\ p_{i+\frac{1}{2},j-\frac{1}{2}}
\end{pmatrix}
&=&f_{i+\frac{1}{2},j}^2 \label{eq:dis-moment2}\\
\tau (\partial_{x})_{h/2}\odot
\begin{pmatrix}
u_{i,j+\frac{1}{2}} & u_{i+1,j+\frac{1}{2}}
\end{pmatrix}
+\tau (\partial_{y})_{h/2}\odot
\begin{pmatrix}
v_{i+\frac{1}{2},j+1} \\ v_{i+\frac{1}{2},j}
\end{pmatrix}
-\frac{\tau}{\lambda}p_{i+\frac{1}{2},j+\frac{1}{2}}
&=&0, \label{eq:dis-div}
\end{eqnarray}
where $f_{i,j+\frac{1}{2}}^1$ and $f_{i+\frac{1}{2},j}^2$ are approximations 
of $f^1(x_i,y_{j+\frac{1}{2}})$ and $f^2(x_{i+\frac{1}{2}},y_j)$, respectively.   We remark that  modifications of the discretizations of \eqref{eq:dis-moment1},   \eqref{eq:dis-moment2},  and  \eqref{eq:dis-div} near boundaries are needed \cite{chen2018programming}. We omit it here.

In addition, the homogeneous boundary condition  \eqref{eq:elasticity-uv-bd} can be expressed as
\begin{equation*}
	u_{0,\frac{1}{2}} = u_{0,\frac{3}{2}} = \cdots = u_{0,\frac{2N-1}{2}}=
	u_{N,\frac{1}{2}} = u_{N,\frac{3}{2}} = \cdots = u_{N,\frac{2N-1}{2}} = 0,
\end{equation*}
and
\begin{equation*}
	v_{\frac{1}{2},0} = v_{\frac{3}{2},0} = \cdots = v_{\frac{2N-1}{2},0}=
	v_{\frac{1}{2},N} = v_{\frac{3}{2},N} = \cdots = v_{\frac{2N-1}{2},N} = 0.
\end{equation*}

Eliminating the boundary unknowns of $U$ and $V$, we obtain the following linear system 
\begin{equation}\label{eq:saddle-linear-system}
 \begin{pmatrix}
     \mathcal{A} & B^{T}\\
     B & -C\\
 \end{pmatrix}
        \begin{pmatrix} \boldsymbol{u}_{h} \\ p_{h}\end{pmatrix}
				=\begin{pmatrix} \boldsymbol{f}_h \\ 0 \end{pmatrix}={b}_h,
 \end{equation}
where $\mathcal{A}$ is the discrete representation of the scaled vector  Laplacian operator  
$-\epsilon \Delta\boldsymbol{u}$,  the matrix block $B^T$ is the negative discrete gradient, and $B$ the  discrete divergence.  $C$ contains the term $\frac{\tau}{\lambda}$.  Notice that in practice operator $C$ is an almost zero block because $\lambda$ is very large. 
$\boldsymbol{u}_{h}=( u_h , v_h )^T$ with 
\begin{equation*}
	u_h = \mbox{vec}(U),\ v_h = \mbox{vec}(V),\ p_h = \mbox{vec}(P),
\end{equation*}
where
\begin{align*}
	 \mbox{vec}(U) &= (u_{1,1/2},\dots,u_{N-1,1/2}, \, u_{1, 3/2}, \dots, u_{N-1, 3/2}, \dots,u_{1, N-1/2}, \dots, u_{N-1, N-1/2})^{T} \in \mathbb{R}^{(N-1)N}, \\
	 \mbox{vec}(V) &= (v_{1/2,1},\dots, v_{N-1/2,1}, \,v_{1/2, 2}, \dots, v_{N-1/2,2}, \dots, v_{1/2, N-1}, \dots, v_{N-1/2, N-1})^{T} \in \mathbb{R}^{N(N-1)}, \\
	\mbox{vec}(P) &= (p_{1/2,1/2},\dots, p_{N-1/2,1/2}, \,p_{1/2,3/2}, \dots, p_{N-1/2,3/2}, \dots, p_{1/2,N-1/2}, \dots,p_{N-1/2, N-1/2})^{T} \in \mathbb{R}^{N^2}.
\end{align*}

 \subsection{Braess-Sarazin relaxation} 
In this subsection, we present three block-structured multigrid relaxation schemes for solving \eqref{eq:saddle-linear-system}.   Solving the Laplacian  term efficiently plays an important role in constructing fast  smoothers for  \eqref{eq:saddle-linear-system}.  We consider  Braess-Sarazin-type relaxation \cite{braess1997efficient} (BSR), that is,  BSR smoother has the following structure:

\begin{equation}\label{eq:Precondtion}
   \mathcal{M}=  \begin{pmatrix}
         \epsilon D & B^{T}\\
     B & -C\\
    \end{pmatrix},
\end{equation}
where $ \epsilon D$, whose inverse is easy to compute, is an approximation to $\mathcal{A}$. We propose three choices of $D$, named Jacobi version, mass version and element-wise Vanka version, which lead to three types of $\mathcal{M}$. We now introduce them.

{\bf  Jacobi-BSR:} A simple choice for $D$ is  Jacobi smoother,  that is, 

\begin{equation}\label{eq:Jacobi-D}
   D   =  D_J=
\begin{pmatrix}
 {\rm diag}(A) &  0\\
0 &    {\rm diag}(A) 
\end{pmatrix},
\end{equation}
where  $A$ is the discrete scalar Laplacian.  We refer to the corresponding $\mathcal{M}$ as $\mathcal{M}_J$ and the relaxation scheme as J-BSR.

{\bf Mass-BSR}:   It was shown that the mass matrix obtained from bilinear finite elements is a good approximation to the inverse of the five-point discrete Laplacian operator \cite{CH2021addVanka}.  Moreover,  the mass-based smoother has been extended to the Stokes equations \cite{YH2021massStokes}, which leads to satisfactory convergence factor.  The mass stencil for bilinear discretization  in 2D is given by
\begin{equation}\label{eq:mass-2D-stencil}
  Q    =  \frac{h^2}{36}\begin{bmatrix}
   1 &  4    & 1\\
   4 &  16   & 4\\
   1 &  4    & 1
   \end{bmatrix}. 
\end{equation}
Then, for the mass-based Braess-Sarazin smoother $\mathcal{M}_m$,  $D^{-1}$ in \eqref{eq:Precondtion} is given by  
\begin{equation}\label{eq:C-inverse-Q}
D^{-1}=D_m^{-1}  = 
\begin{pmatrix}
 Q &  0\\
0 &   Q
\end{pmatrix}.
\end{equation}
The advantage of $\mathcal{M}_m$ is that we  only need $Q$ when we update the approximation in \eqref{eq:solution-of-precondtion}. We refer to the corresponding relaxation scheme for \eqref{eq:saddle-linear-system} as Q-BSR.
 
{\bf Vanka-BSR}: An additive element-wise Vanka smoother has  been proposed  to solve the Laplace problem efficiently \cite{CH2021addVanka}.   To solve $Az=b$ by additive Vanka relaxation scheme, the idea is to partition the unknowns of $z$ into  $m$ subdomains, then solve each subproblem independently.  The  additive Vanka relaxation scheme is
\begin{equation}
z_{k+1} =z_{k}+\omega M_e(b-A z_k),
\end{equation} 
where 
\begin{equation}
M_e = \sum_{j=1}^{m} V^T_j  W_j A_j^{-1}V_j,
\end{equation} 
in which $V_j$ is a restriction operator mapping the global unknowns (vector) to the $j$-th subproblem, and $W_j$ is a damping (diagonal) matrix, which shows how the solution of $j$-th subproblem is glued to  the global solution.  For the additive element-wise Vanka \cite{CH2021addVanka},  $A_j$ is a $4\times 4$ matrix, given by
\begin{equation*}
A_j = \frac{1}{h^2}
\begin{pmatrix}
4  & -1 & -1 &0\\
-1  & 4 & 0  &-1\\
-1  & 0 & 4  &-1\\
0  & -1 & -1 & 4
\end{pmatrix},
\end{equation*}
and 
\begin{equation*}
W_j =\frac{1}{4}
\begin{pmatrix}
1  & 0 & 0 &0\\
0  & 1 & 0  &0\\
0  & 0 & 1 &0\\
0  & 0 & 0 & 1
\end{pmatrix}.
\end{equation*} 
Then,  $D^{-1}$ in  \eqref{eq:Precondtion} is  given by
\begin{equation}\label{eq:V-D}
  D^{-1}= D^{-1}_v  = 
\begin{pmatrix}
 M_e &  0\\
0 &    M_e
\end{pmatrix},
\end{equation}
where  the stencil representation of $M_e$ \cite{CH2021addVanka}  is given by
\begin{equation}\label{eq:stencil-e-2d}
M_e = \frac{h^2}{96}
\begin{bmatrix}
 1& 4 & 1\\
4 & 28 & 4 \\
1 & 4 &1
\end{bmatrix}.
\end{equation} 
Note that 
\begin{equation*}
M_e =  \frac{3}{8}Q+ \frac{h^2}{8}I,
\end{equation*} 
which can help form global matrix $M_e$ using the tensor product of mass matrix in 1D. We refer to the corresponding relaxation scheme for \eqref{eq:saddle-linear-system} as V-BSR.

Based on smoother $\mathcal{M}$ and  a given approximation $\boldsymbol{y}_k$, the BSR-type relaxation scheme for solving \eqref{eq:saddle-linear-system} is
\begin{equation}\label{eq:relaxation-scheme}
\boldsymbol{y}_{k+1} = \boldsymbol{y}_k+\omega  \mathcal{M}^{-1}(b_h- \mathcal{L}_h\boldsymbol{y}_k),
\end{equation} 
where $\omega$ is a damping parameter to be determined. Then, the relaxation error operator for \eqref{eq:relaxation-scheme} is 
\begin{equation}\label{eq:relax-error-operator}
\mathcal{S}_h = I-\omega \mathcal{M}^{-1} \mathcal{L}_h.
\end{equation}

 In \eqref{eq:relaxation-scheme}, we can update the approximation by two steps. Let $\delta \boldsymbol{ y} =\mathcal{M}^{-1}(b_h- \mathcal{L}_{h} \boldsymbol{ y}_k)$. Then, the update 
 $\delta \boldsymbol{y}=(\delta _{\boldsymbol{u}}, \delta_p)$  is given by  
\begin{eqnarray}
  (C+B( \epsilon D)^{-1}B^{T})\delta_p&=&B ( \epsilon  D)^{-1} r_{\boldsymbol{u}}- r_{p}, \label{eq:solution-of-precondtion}\\
  \delta_{\boldsymbol{u}}&=& (  \epsilon D)^{-1}  (r_{\boldsymbol{u}}-B^{T}\delta_p),\nonumber
\end{eqnarray}
where $(r_{\boldsymbol{u}},r_{p})=b-\mathcal{L}\boldsymbol{y}_k$. Finally, $\boldsymbol{y}_{k+1} = \boldsymbol{y}_k+\omega \delta \boldsymbol{y}$.

Solving \eqref{eq:solution-of-precondtion} exactly is   expensive. In practice, we consider an inexact solve. For example, we  can apply a few  sweeps of weighted Jacobi iteration to the Schur complement system \eqref{eq:solution-of-precondtion}.  We refer to the inexact solve for the three versions  as J-IBSR, Q-IBSR, V-IBSR.  We  focus on the smoothing analysis for exact versions, then numerically test inexact versions.

\section{Smoothing analysis}\label{sec:LFA}

It is known that  the choice of multigrid components, such as relaxation parameter and grid-transfer operators, plays an important role in designing fast multigrid methods.  In relaxation scheme \eqref{eq:relaxation-scheme}, we have three choices of $\mathcal{M}$. We are curious that which gives the best performance and whether these smoothers are robust with respect to physical parameters, $\lambda$ and $\epsilon$. To answer these questions,
we employ LFA \cite{trottenberg2000multigrid,wienands2004practical}  to study and predict  multigrid performance. In LFA,  the relaxation error operator corresponds to a matrix {\it symbol}. Then, we can examine the spectral radius of the symbol of the relaxation error operator and  guide the choice of $\omega$.  In this process, we  minimize the largest spectral radii of the symbol  of all high frequencies, see \eqref{eq:H-L-frequency}, over real damping parameter to obtain the optimal {\it LFA smoothing factor}. Here, we consider multigrid methods with standard coarsening in  two-dimensions. The low and high frequencies for LFA are given by
\begin{equation}\label{eq:H-L-frequency}
\boldsymbol{\theta}=(\theta_1,\theta_2) \in T^{\rm L} = \left(-\frac{\pi}{2}, \frac{\pi}{2}\right], \quad  \boldsymbol{\theta}=(\theta_1,\theta_2)  \in T^{\rm H} = \left(\frac{\pi}{2}, \frac{3 \pi}{2}\right] \backslash T^{\rm L}. 
\end{equation}
We now introduce the symbol of an operator. 
\begin{definition} \cite{trottenberg2000multigrid}
Let $L_h =[s_{\boldsymbol{\kappa}}]_{h}$  be a scalar stencil operator acting on grid ${G}_{h}$ as
\begin{equation*}
  L_{h}w_{h}(\boldsymbol{x})=\sum_{\boldsymbol{\kappa}\in{V}}s_{\boldsymbol{\kappa}}w_{h}(\boldsymbol{x}+\boldsymbol{\kappa}h),
\end{equation*}
where  $s_{\boldsymbol{\kappa}}\in \mathbb{R}$ is constant,   $w_{h}(\boldsymbol{x}) \in l^{2} ({G}_{h})$, and  ${V}$ is a finite index set. 
Then, the  symbol of $L_{h}$ is 
\begin{equation}\label{eq:symbol-form}
 \widetilde{L}_{h}(\boldsymbol{\theta})=\displaystyle\sum_{\boldsymbol{\kappa}\in{V}}s_{\boldsymbol{\kappa}}e^{i \boldsymbol{\theta}\cdot\boldsymbol{\kappa}},\,\, i^2=-1. 
\end{equation} 
\end{definition} 
One of the most important ingredients  in LFA for multigrid methods is  LFA smoothing factor, which can predict  smoothing behavior.  
For simplicity, we drop subscript $h$ in the rest of the paper, unless it is necessary. 

 \begin{definition}
 The LFA smoothing factor for the relaxation error operator $\mathcal{S}$, see  \eqref{eq:relax-error-operator}, is given by
 \begin{equation}\label{eq:smoothing-mu}
\mu_{\rm loc}(\mathcal{S}(\omega)) = \max_{\boldsymbol{\theta} \in T^{\rm H}} \{\rho(\widetilde{\mathcal{S}}(\boldsymbol{\theta},\omega ) )\},
\end{equation} 
where $ \rho(\widetilde{\mathcal{S}})$ denotes the spectral radius of symbol $\widetilde{\mathcal{S}}$. 
 \end{definition}

 \begin{definition}
 The LFA optimal  smoothing factor for the relaxation error operator $\mathcal{S}$,  is defined as 
 \begin{equation}\label{eq:optimal-mu}
\mu_{\rm opt} = \min_{\omega \in \mathbb{R}^{+}} \mu_{\rm loc}(\mathcal{S}(\omega)).
\end{equation} 
 \end{definition}
In a two-grid method, the two-grid error operator can be expressed as
\begin{equation*}
E = \mathcal{S}^{\gamma_2}(I- \mathcal{P} (\mathcal{L}_H)^{-1}\mathcal{R}\mathcal{L})\mathcal{S}^{\gamma_1},
\end{equation*} 
where $\mathcal{L}_H$ ($H=2h$) is obtained from the rediscretization on the coarse grid,  $\mathcal{R}$ is  the restriction operator, and $\mathcal{P}$ is the interpolation operator. The integral numbers, $\gamma_1$ and $\gamma_2$, stand for the number of pre- and post-smoothing steps, respectively. 
For a two-grid method, fine and coarse grids operators are involved. To represent two-grid LFA convergence factor, we define
\begin{equation*}
\boldsymbol{\theta}^{\boldsymbol{\beta}}=(\theta_1^{\beta_1}, \theta_2^{\beta_2})=\boldsymbol{\theta}+(\beta_1 \pi, \beta_2 \pi),
\quad \boldsymbol{\theta} =\boldsymbol{\theta}^{00} \in T^{\rm L},
\end{equation*} 
where $\boldsymbol{\beta}=(\beta_1,\beta_2) \in \{(0,0), (1,0), (0,1), (1,1)\}$. Then, the symbol of two-grid error operator is   given by
\begin{equation*}
\widetilde{\mathbf{E}}(\boldsymbol{\theta}) = \widetilde{\mathbf{S}}^{\gamma_2}(\boldsymbol{\theta}) \left (\mathbf{I}- \widetilde{\mathbf{P}}(\boldsymbol{\theta}) \widetilde{\mathcal{L}}_H(2\boldsymbol{\theta}) ^{-1} \widetilde{\mathbf{R}}(\boldsymbol{\theta}) \widetilde{\mathbf{L}}(\boldsymbol{\theta}) \right) \widetilde{\mathbf{S}}^{\gamma_1}(\boldsymbol{\theta}), 
\end{equation*} 
where 
\begin{align*}
\widetilde{\mathbf{S}}(\boldsymbol{\theta}) & = {\rm diag} \{\widetilde{\mathcal{S}}(\boldsymbol{\theta}^{00}),\, \widetilde{\mathcal{S}}(\boldsymbol{\theta}^{10}), \, \widetilde{\mathcal{S}}(\boldsymbol{\theta}^{01}), \, \widetilde{\mathcal{S}}(\boldsymbol{\theta}^{11})\},\\
\widetilde{\mathbf{R}}(\boldsymbol{\theta}) & =  ( \widetilde{\mathcal{R}}(\boldsymbol{\theta}^{00}),\,  \widetilde{\mathcal{R}}(\boldsymbol{\theta}^{10}),\, \widetilde{\mathcal{R}}(\boldsymbol{\theta}^{01}),\, \widetilde{\mathcal{R}}(\boldsymbol{\theta}^{11})),\\
\widetilde{\mathbf{P}}(\boldsymbol{\theta}) & =  ( \widetilde{\mathcal{P}}(\boldsymbol{\theta}^{00});\, \widetilde{\mathcal{P}}(\boldsymbol{\theta}^{10});\, \widetilde{\mathcal{P}}(\boldsymbol{\theta}^{01});\, \widetilde{\mathcal{P}}(\boldsymbol{\theta}^{11})),
\end{align*}
in which ${\rm diag}\{T_1, T_2, T_3, T_4\}$  stands for the block diagonal matrix with diagonal blocks, $T_1, T_2, T_3$ and $T_4$.
\begin{definition}
 The two-grid LFA convergence factor for  $E$  is defined as 
 \begin{equation}\label{eq:LFA-rho}
\rho_h(\gamma) = \max_{\boldsymbol{\theta} \in T^{\rm L}} \{\rho(\widetilde{\mathbf{E}}(\boldsymbol{\theta},\omega ) )\},
\end{equation} 
where $ \rho(\widetilde{\mathbf{E}})$ denotes the spectral radius of symbol $\widetilde{\mathbf{E}}$. 
 \end{definition}
 
In many applications, the two-grid LFA convergence factor is almost the same as LFA smoothing factor, and both offer a sharp prediction of  actual two-grid   performance. Our main goal is to analytically find the optimal LFA smoothing factor \eqref{eq:optimal-mu} and optimal parameter, $\omega$, for the three types of BSR schemes discussed in Section \ref{sec:discretization}, and use these optimal $\omega$ to compute the corresponding two-grid LFA convergence factor, see \eqref{eq:LFA-rho}. 

Before deriving the optimal smoothing factor for the three relaxation schemes for the linear elasticity, we review the mass approximation to the scalar Laplacian.  Using \eqref{eq:symbol-form}, the symbol of $Q$, see \eqref{eq:mass-2D-stencil},   is
\begin{equation}\label{eq:symbol-mass-Q}
  \widetilde{Q}(\theta_1,\theta_2)  =\frac{h^2}{9}(4+2\cos\theta_1+2\cos\theta_2+\cos\theta_1\cos\theta_2),
\end{equation}
and the symbol of the five-point finite difference stencil for the scalar Laplacian is 
\begin{equation}\label{eq:scalar-symbol-Laplace}
\widetilde{A}  =\frac{4-2\cos\theta_{1}-2\cos\theta_2}{h^2}=:\frac{ \chi}{h^2}.
\end{equation}
It is obvious that $\widetilde{{\rm diag}( A )} = \frac{4}{h^2}=:\frac{ \eta_J}{h^2}$.  For simplicity, let  $\eta_m=(\widetilde{Q}/h^2)^{-1}$. Then  $\widetilde{Q}\widetilde{A}=\frac{  \chi}{\eta_m}$. 

Using  \eqref{eq:symbol-form}, the symbol of $M_e$ is
\begin{equation}\label{eq:symbol-Me}
  \widetilde{M}_e (\theta_1,\theta_2)  =\frac{h^2}{24}(7+2(\cos\theta_1+ \cos\theta_2)+\cos\theta_1\cos\theta_2). 
\end{equation}
For simplicity, let  $\eta_v=(\widetilde{M}_e/h^2)^{-1}$. Then  $\widetilde{M}_e\widetilde{A}=\frac{  \chi}{\eta_v}$.  We now review the optimal smoothing factor results   for the scalar Laplacian  \cite{CH2021addVanka}. 

\begin{lemma}\label{lemma:Jacobi-Laplace-opt-mu} \cite{he2018local}
Consider the relaxation error operator $S_J= I-\omega {\rm diag}(A) A $. For $\boldsymbol{\theta} \in T^{\rm H}$, we have
\begin{equation}\label{eq:JA-min-max-values}
 \widetilde{{\rm diag}( A )}\widetilde{A}=\frac{ \chi}{\eta_J}  \in \left[\frac{1}{2}, 2 \right].
\end{equation} 
Moreover, the optimal smoothing factor for $S_J$   is 
\begin{equation*}
\mu_{\rm opt}(S_J) = \min_{\omega} \max_{\boldsymbol{\theta} \in T^{\rm H}} \{|1-\omega  \widetilde{{\rm diag}( A )} \widetilde{A}_s|\} =\frac{3}{5}=0.600,
\end{equation*} 
provided that $\omega=\frac{4}{5}$.
\end{lemma}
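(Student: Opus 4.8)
The plan is to reduce the two–dimensional min–max problem over frequencies and the damping parameter to an elementary one–dimensional optimization, and then to solve that by an equioscillation argument. First I would establish the range claim \eqref{eq:JA-min-max-values}. Writing $c_j=\cos\theta_j$, we have $\frac{\chi}{\eta_J}=\frac{4-2c_1-2c_2}{4}=1-\tfrac{1}{2}(c_1+c_2)$, so controlling this quantity amounts to controlling $c_1+c_2$ over $T^{\rm H}$. On $T^{\rm H}$ at least one component, say $\theta_1$, lies in $(\pi/2,3\pi/2]$, which forces $c_1\in[-1,0]$, while the other component allows $c_2\in[-1,1]$. Hence $c_1+c_2$ is maximized at $(c_1,c_2)=(0,1)$, attained e.g.\ at $\boldsymbol{\theta}=(3\pi/2,0)\in T^{\rm H}$, giving $\frac{\chi}{\eta_J}=\tfrac{1}{2}$, and minimized at $(c_1,c_2)=(-1,-1)$, attained at $\boldsymbol{\theta}=(\pi,\pi)\in T^{\rm H}$, giving $\frac{\chi}{\eta_J}=2$. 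Since $\frac{\chi}{\eta_J}$ depends continuously on $\boldsymbol{\theta}$ and both endpoints are genuinely attained, it sweeps the full interval $[\tfrac{1}{2},2]$, which is exactly \eqref{eq:JA-min-max-values}.

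Next I would introduce the scalar variable $t:=\frac{\chi}{\eta_J}$ and treat the smoothing factor as a function of $t$ alone. The symbol of the error operator is $\widetilde{S}_J=1-\omega t$, and by the first step $t$ ranges over precisely $[\tfrac{1}{2},2]$ as $\boldsymbol{\theta}$ ranges over $T^{\rm H}$, so
\begin{equation*}
\mu_{\rm loc}(S_J(\omega)) = \max_{t\in[\frac{1}{2},2]} |1-\omega t|.
\end{equation*}
For any fixed $\omega>0$ the map $t\mapsto 1-\omega t$ is affine and strictly decreasing, so $|1-\omega t|$ attains its maximum over $[\tfrac{1}{2},2]$ at one of the two endpoints, yielding $\mu_{\rm loc}(S_J(\omega))=\max\{|1-\tfrac{\omega}{2}|,\,|1-2\omega|\}$.

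Finally I would minimize this over $\omega$. The two endpoint expressions, $|1-\tfrac{\omega}{2}|$ and $|1-2\omega|$, behave oppositely in $\omega$, so the minimizer of their maximum occurs where they balance with opposite signs, i.e.\ $1-\tfrac{\omega}{2}=-(1-2\omega)$. Solving gives $\omega=\tfrac{4}{5}$, at which both expressions equal $\tfrac{3}{5}$, so $\mu_{\rm opt}(S_J)=\tfrac{3}{5}$. I expect the only genuinely delicate step to be the open/closed bookkeeping of the frequency intervals in the first paragraph, which is needed to guarantee that the extreme values $\tfrac{1}{2}$ and $2$ are \emph{achieved} inside $T^{\rm H}$ rather than merely approached; once the range $[\tfrac{1}{2},2]$ is secured, the reduction and the equioscillation solve are routine.
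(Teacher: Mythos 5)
Your proof is correct. There is no in-paper proof to compare against: the paper quotes this lemma from \cite{he2018local} without proof, and your argument --- computing $\chi/\eta_J = 1-\tfrac12(\cos\theta_1+\cos\theta_2)$, showing its range over $T^{\rm H}$ is exactly $\left[\tfrac12,2\right]$ with both endpoints attained (at $(3\pi/2,0)$ and $(\pi,\pi)$), reducing the min--max to the two endpoints by convexity of $|1-\omega t|$, and equioscillating $1-\tfrac{\omega}{2}=-(1-2\omega)$ to get $\omega=\tfrac45$, $\mu_{\rm opt}=\tfrac35$ --- is the standard argument behind the cited result and matches the style of the paper's own proof of Theorem \ref{thm:bound-sigma*}. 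One notational point you resolved silently (and correctly): the relaxation must be read as $S_J = I-\omega\,({\rm diag}(A))^{-1}A$, since the paper's identity $\widetilde{{\rm diag}(A)}\widetilde{A}=\chi/\eta_J$ only makes sense with the inverse of the diagonal.
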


\begin{lemma}\label{lemma:mass-Laplace-opt-mu} \cite{CH2021addVanka}
Consider the relaxation error operator $S_m= I-\omega  QA$. For $\boldsymbol{\theta} \in T^{\rm H}$, we have
\begin{equation}\label{eq:QA-min-max-values}
\widetilde{Q}\widetilde{A} = \frac{ \chi}{\eta_m}  \in \left[\frac{8}{9}, \frac{16}{9}\right].
\end{equation} 
Moreover, the optimal  smoothing factor for $S_m$   is 
\begin{equation*}
\mu_{\rm opt}(S_m) = \min_{\omega} \max_{\boldsymbol{\theta} \in T^{\rm H}} \{|1-\omega  \widetilde{Q} \widetilde{A}|\} =\frac{1}{3} \approx 0.333,
\end{equation*} 
provided that $\omega=\frac{3}{4}$.
\end{lemma}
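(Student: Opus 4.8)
The plan is to follow the two-stage template of Lemma~\ref{lemma:Jacobi-Laplace-opt-mu}: first determine the exact range of the scalar quantity $\widetilde{Q}\widetilde{A}$ over the high frequencies $T^{\rm H}$, and then solve the resulting scalar minimax problem in $\omega$. The key algebraic observation is that the mass symbol \eqref{eq:symbol-mass-Q} factors, $\widetilde{Q}=\frac{h^2}{9}(2+\cos\theta_1)(2+\cos\theta_2)$, so that writing $x=\cos\theta_1$ and $y=\cos\theta_2$ and using \eqref{eq:scalar-symbol-Laplace} gives
\[
\widetilde{Q}\widetilde{A}=\frac{\chi}{\eta_m}=\tfrac{1}{9}(2+x)(2+y)(4-2x-2y)=:g(x,y).
\]
In these variables the high-frequency set \eqref{eq:H-L-frequency} becomes $\{(x,y)\in[-1,1]^2 : x\le 0 \text{ or } y\le 0\}$, since $\boldsymbol{\theta}\in T^{\rm H}$ precisely when at least one of $\theta_1,\theta_2$ lies in $(\pi/2,3\pi/2]$.

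For the range, I would compute $\partial_x g=-\tfrac{2}{9}(2+y)(2x+y)$ and $\partial_y g=-\tfrac{2}{9}(2+x)(x+2y)$; since $2+x,2+y>0$ on $[-1,1]^2$, the only interior stationary point is $(0,0)$, where $g(0,0)=\tfrac{16}{9}$ and the Hessian is negative definite, so this is the global maximum of $g$ on the square. This value is realized in $T^{\rm H}$, e.g. at $\boldsymbol{\theta}=(\pi/2,3\pi/2)$. Minimizing $g$ then reduces to the boundary of the region: a short edge-by-edge check shows the smallest value is $g(-1,-1)=\tfrac{8}{9}$, attained at $\boldsymbol{\theta}=(\pi,\pi)$, while all remaining edge and re-entrant-segment values stay between $\tfrac{8}{9}$ and $\tfrac{16}{9}$. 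Because $g$ is continuous, its image over $T^{\rm H}$ is the full interval $[\tfrac{8}{9},\tfrac{16}{9}]$, which is \eqref{eq:QA-min-max-values}.

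Finally, for the smoothing factor I would note $\widetilde{S}_m=1-\omega\widetilde{Q}\widetilde{A}=1-\omega t$ with $t\in[\tfrac{8}{9},\tfrac{16}{9}]$; being affine in $t$, its modulus is maximized at an endpoint, so $\mu_{\rm loc}(\mathcal{S}(\omega))=\max\{|1-\tfrac{8}{9}\omega|,\,|1-\tfrac{16}{9}\omega|\}$. The optimum of this piecewise-linear function is the equioscillation point $1-\tfrac{8}{9}\omega=-(1-\tfrac{16}{9}\omega)$, which yields $\omega=\tfrac{2}{8/9+16/9}=\tfrac{3}{4}$ and $\mu_{\rm opt}(S_m)=\tfrac{16/9-8/9}{16/9+8/9}=\tfrac{1}{3}$, as claimed. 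The only genuinely delicate point is the geometry of $T^{\rm H}$ in the $(x,y)$ coordinates: the maximum of $g$ sits exactly at the re-entrant corner $(0,0)$ of the L-shaped region, so one must confirm it is realized by a high-frequency (not merely low-frequency) mode; everything after that is routine one-dimensional optimization.
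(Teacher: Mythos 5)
Your proposal is correct, and every computation in it checks out: the factorization $\widetilde{Q}=\tfrac{h^2}{9}(2+\cos\theta_1)(2+\cos\theta_2)$ of \eqref{eq:symbol-mass-Q} is exact; the image of $T^{\rm H}$ under $(x,y)=(\cos\theta_1,\cos\theta_2)$ is indeed the L-shaped set $\{x\le 0\}\cup\{y\le 0\}$ (and you correctly handle the boundary subtlety, since $\boldsymbol{\theta}=(\pi/2,3\pi/2)\in T^{\rm H}$ realizes $(x,y)=(0,0)$); the gradient $\partial_x g=-\tfrac{2}{9}(2+y)(2x+y)$, $\partial_y g=-\tfrac{2}{9}(2+x)(x+2y)$ pins down $(0,0)$ as the unique stationary point with $g(0,0)=\tfrac{16}{9}$; the edge values (maximum $\tfrac{3}{2}$ on $x=1$, maximum $\tfrac{25}{18}$ on $x=-1$, minimum $\tfrac{8}{9}$ at $(-1,-1)$, and values in $[\tfrac43,\tfrac{16}{9}]$ on the re-entrant segments) confirm the range \eqref{eq:QA-min-max-values}; and the equioscillation step $\omega=2/(t_1+t_2)=\tfrac34$, $\mu_{\rm opt}=(t_2-t_1)/(t_2+t_1)=\tfrac13$ is routine. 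The comparison with the paper is somewhat moot here: the paper does not prove Lemma~\ref{lemma:mass-Laplace-opt-mu} at all, but quotes it from \cite{CH2021addVanka}, so your argument supplies a self-contained derivation where the paper defers to a reference. Structurally, your two-stage template (range of the scalar symbol over $T^{\rm H}$, then a one-dimensional minimax in $\omega$) is precisely the mechanism this paper uses elsewhere, e.g.\ in the proof of Theorem~\ref{thm:bound-sigma*} and in assembling Theorem~\ref{thm:optimal-mu}, so your proof fits the paper's framework seamlessly.

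One presentational caveat: a negative-definite Hessian at the unique interior critical point does not by itself make $(0,0)$ the \emph{global} maximum on the closed square; that conclusion only follows once the boundary values have been checked, which you do in the following sentence. Reordering those two statements (boundary check first, then conclude global optimality) would make the logic airtight as written.
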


\begin{lemma}\label{lemma:Vanka-Laplace-opt-mu} \cite{CH2021addVanka}
Consider the relaxation error operator $S_v= I-\omega M_e A $. For $\boldsymbol{\theta} \in T^{\rm H}$, we have
\begin{equation}\label{eq:Vanka-min-max-values}
 \widetilde{M}_e \widetilde{A}= \frac{ \chi}{\eta_v}  \in \left[\frac{3}{4}, \frac{4}{3}\right].
\end{equation} 
Moreover, the optimal  smoothing factor for $S_v$   is 
\begin{equation*}
\mu_{\rm opt}(S_v) = \min_{\omega} \max_{\boldsymbol{\theta} \in T^{\rm H}} \{|1-\omega  \widetilde{M}_e \widetilde{A}|\} =\frac{7}{25}= 0.280,
\end{equation*} 
provided that $\omega=\frac{24}{25}$.
\end{lemma}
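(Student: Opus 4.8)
The proof splits into two parts, mirroring Lemmas~\ref{lemma:Jacobi-Laplace-opt-mu} and~\ref{lemma:mass-Laplace-opt-mu}: first the range bound $\widetilde{M}_e\widetilde{A}\in[\tfrac34,\tfrac43]$ on $T^{\rm H}$, and then the minimax over $\omega$. Once the range is in hand, the second part is routine. Writing $g:=\widetilde{M}_e\widetilde{A}=\chi/\eta_v$, for each fixed $\omega>0$ the quantity $|1-\omega g|$ is affine in $g$, so its maximum over $g\in[a,b]=[\tfrac34,\tfrac43]$ is attained at an endpoint, giving $\max\{|1-\omega a|,\,|1-\omega b|\}$. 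Minimizing this over $\omega$ balances the two endpoints, $1-\omega a=\omega b-1$, which yields $\omega=\tfrac{2}{a+b}=\tfrac{24}{25}$ and optimal value $\tfrac{b-a}{a+b}=\tfrac{7}{25}$. I would also verify that the endpoints $a,b$ are genuinely attained on $T^{\rm H}$, so that the maximum in \eqref{eq:smoothing-mu} is truly governed by $[a,b]$; this is a byproduct of the extremal analysis below.

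The heart of the argument is the range bound. Setting $x=\cos\theta_1$, $y=\cos\theta_2$ and using \eqref{eq:symbol-Me} and \eqref{eq:scalar-symbol-Laplace}, I would write
\begin{equation*}
\widetilde{M}_e\widetilde{A}=\frac{(4-2x-2y)(7+2x+2y+xy)}{24}=:f(x,y).
\end{equation*}
Since the stencils are symmetric, $f$ depends on $\boldsymbol{\theta}$ only through $(x,y)$, and a high frequency ($\theta_j\in(\tfrac{\pi}{2},\tfrac{3\pi}{2}]$) forces the corresponding cosine into $[-1,0]$; a short check of the four low/high combinations shows $T^{\rm H}$ corresponds exactly to the L-shaped region $\{(x,y)\in[-1,1]^2:\min(x,y)\le 0\}$. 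The plan is to optimize $f$ there via the substitution $s=x+y$, $p=xy$, so that $f=\tfrac{(4-2s)(7+2s+p)}{24}$. Because $s\le 1$ on the L-shaped region, the factor $4-2s>0$, hence $f$ is increasing in $p$, and the extrema are reached by pushing $p$ to its feasible boundary for each fixed $s$. This reduces the problem to one-dimensional optimizations in $s\in[-2,1]$, from which I expect the maximum $f=\tfrac43$ at $(x,y)=(-1,-1)$ (i.e.\ $\theta_1=\theta_2=\pi$) and the minimum $f=\tfrac34$ at $(x,y)=(1,0)$ and $(0,1)$.

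The step I expect to be the main obstacle is pinning down the feasible range of $p=xy$ as a function of $s=x+y$ over the L-shaped region: the constraint $\min(x,y)\le 0$ makes this range genuinely piecewise (the unconstrained maximizer $x=y=s/2$ is admissible only when $s\le 0$, whereas for $0<s\le1$ the constrained maximum of $p$ sits on a coordinate axis), and a careless reduction would introduce a spurious extremum. A useful cross-check is the identity $\widetilde{M}_e=\tfrac38\widetilde{Q}+\tfrac{h^2}{8}$, which gives $\widetilde{M}_e\widetilde{A}=\tfrac38\widetilde{Q}\widetilde{A}+\tfrac{\chi}{8}$ and links the present bounds to $\widetilde{Q}\widetilde{A}\in[\tfrac89,\tfrac{16}{9}]$ from Lemma~\ref{lemma:mass-Laplace-opt-mu}; however, the correlation between $\widetilde{Q}\widetilde{A}$ and $\chi$ means this decomposition only corroborates, rather than replaces, the direct optimization. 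Once the range $[\tfrac34,\tfrac43]$ is confirmed and its endpoints shown to be attained, substituting $a=\tfrac34$, $b=\tfrac43$ into the minimax formula of the first paragraph completes the proof.
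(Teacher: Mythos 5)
Your proposal is correct, but there is no in-paper proof to compare it against: the paper states this lemma as an imported result from \cite{CH2021addVanka} (just as it does Lemmas \ref{lemma:Jacobi-Laplace-opt-mu} and \ref{lemma:mass-Laplace-opt-mu}), so your argument is a self-contained reconstruction of a cited result rather than a variant of anything the paper derives. The reconstruction checks out. Your identification of the image of $T^{\rm H}$ under $(\cos\theta_1,\cos\theta_2)$ with the L-shaped set $\{(x,y)\in[-1,1]^2:\min(x,y)\le 0\}$ is right, including the boundary case $\cos\theta_j=0$, which is realized at $\theta_j=3\pi/2$; the $(s,p)$ reduction also goes through exactly as you anticipate. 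Since $4-2s>0$ on the region, your $f=\frac{(4-2s)(7+2s+p)}{24}$ is increasing in $p$. For the maximum, the feasible bound is $p\le s^2/4$ when $s\le 0$ and $p\le 0$ when $0<s\le 1$ (one coordinate is nonpositive there), and the resulting function $\frac{(4-2s)(7+2s+s^2/4)}{24}$ has $s$-derivative $-\frac{(s+2)^2}{16}\le 0$, so the maximum is $f(-1,-1)=\frac43$, i.e.\ at $\boldsymbol{\theta}=(\pi,\pi)$. For the minimum, $p\ge s-1$ when $0\le s\le 1$, giving $f=\frac{4-s^2}{4}\ge\frac34$ with equality at $s=1$ (the points $(1,0)$ and $(0,1)$, realized at the high frequency $\boldsymbol{\theta}=(0,3\pi/2)$), while $p\ge -(s+1)$ when $-2\le s<0$ gives $f=\frac{24-8s-2s^2}{24}\ge 1$. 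Hence the range is exactly $\left[\frac34,\frac43\right]$ with both endpoints attained on $T^{\rm H}$, and your endpoint-balancing step yields $\omega=\frac{2}{3/4+4/3}=\frac{24}{25}$ and $\mu_{\rm opt}=\frac{4/3-3/4}{4/3+3/4}=\frac{7}{25}$, matching the lemma. The one caveat: your write-up leaves the two one-dimensional optimizations as ``expected'' outcomes; as verified above they do confirm your claimed extrema, but since they (together with the piecewise feasible range of $p$ that you correctly flag as the delicate point) constitute the substantive content of the proof, they should be carried out explicitly in a final version.
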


Lemmas \ref{lemma:Jacobi-Laplace-opt-mu},  \ref{lemma:Vanka-Laplace-opt-mu}, and  \ref{lemma:mass-Laplace-opt-mu}   play an important role in our smoothing analysis for BSR for  the linear elasticity, since the Laplacian term appears in BSR.  It can be shown that the symbol of the discrete linear elasticity $\mathcal{L}$, see \eqref{eq:Lh-operator},  is 
\begin{equation*} 
   \widetilde{\mathcal{L}}(\theta_1,\theta_2) =\frac{1}{h^2}\begin{pmatrix}
        \epsilon  \chi       & 0                    & -i \tau 2h \sin\frac{\theta_1}{2}  \\
      0                   &   \epsilon   \chi          & -i \tau 2h \sin\frac{\theta_2}{2} \\
      i \tau 2h \sin\frac{\theta_1}{2}   &  i \tau 2h \sin\frac{\theta_2}{2}          & -\frac{\tau}{\lambda}h^2
    \end{pmatrix}.
\end{equation*}

For simplicity, denote $\eta$ be the symbol of $D$, where  $\eta=\eta_J, \eta_v, \eta_m$. Then,  the symbol of $\mathcal{M}$ is  
\begin{equation*} 
   \widetilde{\mathcal{M}}(\theta_1,\theta_2) =\frac{1}{h^2}\begin{pmatrix}
        \epsilon   \eta          & 0                      & -i \tau  2h \sin\frac{\theta_1}{2}  \\
      0                     &  \epsilon   \eta            &- i \tau 2h \sin\frac{\theta_2}{2} \\
      i \tau 2h \sin\frac{\theta_1}{2}   &  i  \tau 2h \sin\frac{\theta_2}{2}     & -\frac{\tau}{\lambda}h^2
    \end{pmatrix}.
\end{equation*}
Let $a=\tau  2h \sin\frac{\theta_1}{2},  b=\tau  2h \sin\frac{\theta_2}{2} $ and $c=\frac{\tau}{\lambda}h^2$.  To compute  the eigenvalues  $\sigma$ of $ \widetilde{\mathcal{M}}^{-1}  \widetilde{\mathcal{L}}$, we first compute the determinant of $ \widetilde{\mathcal{L}}-\sigma  \widetilde{\mathcal{M}}$:
 \begin{eqnarray*}
 | \widetilde{\mathcal{L}}-\sigma  \widetilde{\mathcal{M}}| &= &  
 \frac{1}{h^2}\begin{vmatrix}
        \epsilon \chi - \sigma \epsilon \eta         & 0                      & -a+\sigma a \\
      0                     &  \epsilon \chi  - \sigma \epsilon \chi          &-b+\sigma b \\
      a - \sigma a  &   b-\sigma b & -c+\sigma c
    \end{vmatrix}  \\
    &=& \frac{1}{h^2} ( \epsilon \chi  - \sigma \epsilon \eta  ) \left( ( \epsilon \chi  - \sigma \epsilon  \eta   ) (-c+\sigma c)+ (1-\sigma)^2(a^2+b^2)\right) \\
    &=& \frac{1}{h^2} (\epsilon \chi - \sigma \epsilon \eta  )(\sigma -1) \left( ( \epsilon \chi  - \sigma  \epsilon\eta  ) c- \chi(\sigma-1) \tau^2h^2\right) \\
    &=&  (\epsilon \chi  - \sigma  \epsilon \eta     )(\sigma -1) \left( ( \epsilon \chi - \sigma  \epsilon \eta  ) \frac{\tau}{\lambda}- \chi (\sigma-1) \tau^2\right).
 \end{eqnarray*}
 From the determinant, it can be shown  that the three eigenvalues of  $\widetilde{\mathcal{M}}^{-1}  \widetilde{\mathcal{L}}$ are 
 \begin{equation}\label{eq:three-eigs}
 1, \quad  \frac{ \chi}{ \eta}, \quad  \frac{ \tau \chi +  \epsilon \chi  /\lambda}{  \tau  \chi+  \epsilon \eta  /\lambda}=:\sigma^*.
 \end{equation}
From Lemmas \ref{lemma:Jacobi-Laplace-opt-mu}, \ref{lemma:mass-Laplace-opt-mu}, and \ref{lemma:Vanka-Laplace-opt-mu}, we know that the optimal smoothing factor corresponding to eigenvalues $1, \frac{ \chi}{ \eta}$.  
To explore the optimal smoothing factor for these three eigenvalues,  we first explore the range of $\sigma^*$ in \eqref{eq:three-eigs}.   Using $\tau =\frac{\lambda+ \epsilon }{\lambda}=1+\frac{\epsilon}{\lambda}$, we can simplify $\sigma^*$ as follows:
 
 \begin{eqnarray*}
\sigma^* &=& \frac{\tau  \chi +  \epsilon \chi /\lambda}{ \tau \chi  +  \eta   \epsilon /\lambda} \\
& =& \frac{ \tau +  \epsilon /\lambda}{ \tau +  \frac{\eta }{\chi}  \epsilon /\lambda} \\
& =& \frac{  1+\frac{ \epsilon }{\lambda} +  \frac{ \epsilon }{\lambda} }{ 1 + \frac{ \epsilon }{\lambda}  + \frac{\eta   \epsilon }{ \chi \lambda}  } \\
&=&  \frac{  1+2\frac{\epsilon}{\lambda}  }{ 1 + (1+ \frac{\eta }{ \chi})\frac{ \epsilon }{\lambda} }=\sigma^*\left(\frac{\eta}{\chi}\right).  
\end{eqnarray*} 
Since $\sigma^*\left(\frac{\eta}{\chi}\right)$ is a function of $\frac{\eta}{\chi}$, and we know the range of  $\frac{\chi}{\eta}$ from Lemmas \ref{lemma:Jacobi-Laplace-opt-mu},  \ref{lemma:mass-Laplace-opt-mu},  and \ref{lemma:Vanka-Laplace-opt-mu},  we now present the range of $\sigma^*\left(\frac{\eta}{\chi}\right)$ for $\eta =\eta_J, \eta_m, \eta_v$.
\begin{theorem}\label{thm:bound-sigma*}
 Let $0<t_1<1<t_2$.  Assume that  $\frac{\eta}{ \chi} \in [t_1, t_2]$, that is, $\frac{\chi}{ \eta} \in [1/t_2, 1/t_1]$. For  $\boldsymbol{\theta}\in T^{\rm H}$ , we have $\sigma^*\left(\frac{\eta}{\chi}\right) \in (1/t_2, 1/t_1)$.
Furthermore,

\begin{itemize}
\item $\sigma^*\left(\frac{\eta_J}{\chi}\right) \in (1/2, 2)$. 

\item $\sigma^*\left(\frac{\eta_m}{\chi}\right) \in (8/9, 16/9)$. 

\item $\sigma^*\left(\frac{\eta_v}{\chi}\right) \in (3/4, 4/3)$. 
\end{itemize}
\end{theorem}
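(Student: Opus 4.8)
The plan is to treat $\sigma^*$ as a function of the single scalar $x := \eta/\chi$ and to track how it moves as $x$ ranges over $[t_1,t_2]$. Writing $s := \epsilon/\lambda$, which is strictly positive since $\epsilon,\lambda>0$, the simplified expression obtained just above the theorem reads
\begin{equation*}
\sigma^*(x) = \frac{1+2s}{1+(1+x)s}.
\end{equation*}
For each fixed $s>0$ the numerator is constant while the denominator is strictly increasing in $x$, so $\sigma^*$ is a strictly decreasing function of $x$. Consequently, as $x$ runs over the closed interval $[t_1,t_2]$, the value $\sigma^*(x)$ runs over $[\sigma^*(t_2),\sigma^*(t_1)]$, and it suffices to control the two endpoints.

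Next I would establish the two endpoint inequalities $\sigma^*(t_1) < 1/t_1$ and $\sigma^*(t_2) > 1/t_2$, which together yield the claimed containment $\sigma^*(x) \in (1/t_2, 1/t_1)$. Each reduces to a one-line cross-multiplication (legitimate because every denominator in sight is positive): clearing denominators in $\sigma^*(t_1) < 1/t_1$ collapses to $t_1(1+s) < 1+s$, which holds because $t_1 < 1$ and $1+s>0$; likewise $\sigma^*(t_2) > 1/t_2$ collapses to $t_2(1+s) > 1+s$, which holds because $t_2 > 1$. The point I would emphasize is that both inequalities are valid for \emph{every} $s>0$, so the bounds are uniform in the Lam\'e constants $\epsilon,\lambda$; in the nearly-incompressible limit $\lambda\to\infty$ one has $s\to 0$ and $\sigma^*\to 1\in(1/t_2,1/t_1)$, which is precisely the parameter-robustness we are after.

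Finally I would specialize the general bound to the three smoothers by reading off the ranges of $\chi/\eta$ from Lemmas \ref{lemma:Jacobi-Laplace-opt-mu}, \ref{lemma:mass-Laplace-opt-mu}, and \ref{lemma:Vanka-Laplace-opt-mu}. Since the hypothesis $\eta/\chi \in [t_1,t_2]$ is identical to $\chi/\eta \in [1/t_2,1/t_1]$, each lemma directly supplies the pair $(1/t_2,1/t_1)$: the Jacobi range $[1/2,2]$ gives $\sigma^*(\eta_J/\chi)\in(1/2,2)$, the mass range $[8/9,16/9]$ gives $\sigma^*(\eta_m/\chi)\in(8/9,16/9)$, and the Vanka range $[3/4,4/3]$ gives $\sigma^*(\eta_v/\chi)\in(3/4,4/3)$. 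There is no genuine analytic obstacle here; the only things that require care are fixing the correct direction of monotonicity in $x$ and matching the reciprocal intervals coming from the lemmas to $[t_1,t_2]$ versus $(1/t_2,1/t_1)$ without transposing endpoints.
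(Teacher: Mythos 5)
Your proposal is correct and follows essentially the same route as the paper: both argue that $\sigma^*$ is strictly decreasing in $\eta/\chi$, reduce to the endpoint comparisons $\sigma^*(t_1)<1/t_1$ and $\sigma^*(t_2)>1/t_2$ (the paper packages your two cross-multiplications into the single identity $\sigma^*(t)-\tfrac{1}{t}=\tfrac{(t-1)(1+\epsilon/\lambda)}{t\left(1+(1+t)\epsilon/\lambda\right)}$, whose sign is read off from $t-1$), and then invoke Lemmas \ref{lemma:Jacobi-Laplace-opt-mu}, \ref{lemma:mass-Laplace-opt-mu}, and \ref{lemma:Vanka-Laplace-opt-mu} for the three concrete intervals. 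No gap to report.
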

 
 \begin{proof}

Note that $\sigma^*$ is a decreasing function  of  $\frac{\eta}{ \chi}$.   It follows that 
\begin{equation}\label{eq:t1-t2-bound}
  \frac{  1+2\frac{ \epsilon }{\lambda}  }{ 1 + (1+ t_2)\frac{ \epsilon }{\lambda} } =\sigma^*(t_2)  \leq   \sigma^*\left(\frac{\eta}{ \chi}\right)\leq  \sigma^*(t_1)=  \frac{  1+2\frac{ \epsilon }{\lambda}  }{ 1 + (1+ t_1)\frac{ \epsilon }{\lambda} }.
\end{equation} 
Let $t\in[t_1,t_2]$.  From
\begin{eqnarray*}
 \frac{  1+2\frac{ \epsilon }{\lambda}  }{ 1 + (1+ t)\frac{ \epsilon }{\lambda} }  -\frac{1}{t}=\frac{ (t-1)(1+ \epsilon /\lambda)}{t\left(1+(1+t)\epsilon /\lambda\right)},
\end{eqnarray*}
we have 
\begin{equation} 
 \frac{1}{t_2}< \frac{  1+2\frac{ \epsilon }{\lambda}  }{ 1 + (1+ t_2)\frac{ \epsilon }{\lambda} }   \leq   \sigma^* \leq    \frac{  1+2\frac{ \epsilon }{\lambda}  }{ 1 + (1+ t_1)\frac{ \epsilon }{\lambda} }<\frac{1}{t_1}.
\end{equation} 
It follows that $\sigma^*\in (1/t_2, 1/t_1)$.  Using Lemmas \ref{lemma:Jacobi-Laplace-opt-mu}, \ref{lemma:Vanka-Laplace-opt-mu}, \ref{lemma:mass-Laplace-opt-mu}, we obtain the desired results.
 \end{proof}
 
Theorem  \ref{thm:bound-sigma*} tells us that the range of $\sigma^*$ is a subset of the range of $\frac{\chi}{\eta}$.  From Lemmas \ref{lemma:Jacobi-Laplace-opt-mu}, \ref{lemma:Vanka-Laplace-opt-mu}, \ref{lemma:mass-Laplace-opt-mu}, we now present the optimal smoothing factors for the three BSR schemes.
 
 \begin{theorem} \label{thm:optimal-mu}
For the BSR schemes for the linear elasticity problems, we have the following smoothing results. 
\begin{itemize}
\item The optimal smoothing factor for  J-BSR   is
  \begin{equation*} 
   \mu_{{\rm opt} }=  \min_{\omega}\max_{ \boldsymbol{\theta}\in T^{{\rm H}}} \{\rho(\mathcal{\widetilde{S}} ( \omega,\boldsymbol{\theta}))\}=\frac{3 }{5}=0.600,
\end{equation*}
where the minimum is uniquely achieved at $\omega =\frac{4 }{5}$.

\item The optimal smoothing factor for  Q-BSR   is
  \begin{equation*} 
   \mu_{{\rm opt} }=  \min_{\omega}\max_{ \boldsymbol{\theta}\in T^{{\rm H}}} \{\rho(\mathcal{\widetilde{S}} ( \omega,\boldsymbol{\theta}))\}=\frac{1 }{3}\approx 0.333,
\end{equation*}
where the minimum is uniquely achieved at $\omega =\frac{3 }{4 }$.

\item The optimal smoothing factor for  V-BSR   is
  \begin{equation*} 
   \mu_{{\rm opt} }=  \min_{\omega}\max_{ \boldsymbol{\theta}\in T^{{\rm H}}} \{\rho(\mathcal{\widetilde{S}} ( \omega,\boldsymbol{\theta}))\}=\frac{7}{25}= 0.280,
\end{equation*}
where the minimum is uniquely achieved at $\omega =\frac{24 }{25}$.
\end{itemize}
\end{theorem}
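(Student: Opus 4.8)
The plan is to reduce the $3\times 3$ symbol problem to the scalar smoothing analysis already settled in Lemmas~\ref{lemma:Jacobi-Laplace-opt-mu}, \ref{lemma:mass-Laplace-opt-mu} and~\ref{lemma:Vanka-Laplace-opt-mu}, and then to verify that the two extra eigenvalues introduced by the saddle-point structure never dominate. Since the relaxation symbol is $\widetilde{\mathcal{S}}(\omega,\boldsymbol{\theta}) = I-\omega\widetilde{\mathcal{M}}^{-1}\widetilde{\mathcal{L}}$, its eigenvalues are $1-\omega\sigma_i$, where the $\sigma_i$ are the three eigenvalues of $\widetilde{\mathcal{M}}^{-1}\widetilde{\mathcal{L}}$ listed in~\eqref{eq:three-eigs}. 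Hence, independently of any diagonalizability, for every $\boldsymbol{\theta}\in T^{\rm H}$ we have
\[
\rho(\widetilde{\mathcal{S}}(\omega,\boldsymbol{\theta})) = \max\left\{\,|1-\omega|,\ \left|1-\omega\tfrac{\chi}{\eta}\right|,\ |1-\omega\sigma^*|\,\right\},
\]
and $\mu_{\rm loc}(\mathcal{S}(\omega))$ is obtained by maximizing this over $T^{\rm H}$.

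First I would remove the $\sigma^*$ eigenvalue. By Theorem~\ref{thm:bound-sigma*}, for each choice $\eta=\eta_J,\eta_m,\eta_v$ the value $\sigma^*(\boldsymbol{\theta})$ lies in the open interval $(a,b)$ whose closure $[a,b]$ is exactly the range of $\chi/\eta$ over $T^{\rm H}$ supplied by the corresponding lemma. For fixed $\omega>0$ the map $x\mapsto|1-\omega x|$ is convex, so on $[a,b]$ it attains its maximum at an endpoint; since $\chi/\eta$ attains both endpoints while $\sigma^*\in(a,b)$, we obtain the pointwise bound $|1-\omega\sigma^*|\le\max\{|1-\omega a|,|1-\omega b|\}=\max_{\boldsymbol{\theta}\in T^{\rm H}}|1-\omega\chi/\eta|$. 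Thus the $\sigma^*$ term can be dropped, leaving
\[
\mu_{\rm loc}(\mathcal{S}(\omega))=\max\left\{\,|1-\omega|,\ \max_{\boldsymbol{\theta}\in T^{\rm H}}\left|1-\omega\tfrac{\chi}{\eta}\right|\,\right\}.
\]

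Next I would identify the second term as precisely the scalar smoothing factor of Lemmas~\ref{lemma:Jacobi-Laplace-opt-mu}, \ref{lemma:mass-Laplace-opt-mu} and~\ref{lemma:Vanka-Laplace-opt-mu}. Writing $f(\omega):=\max_{\boldsymbol{\theta}\in T^{\rm H}}|1-\omega\chi/\eta|=\max\{|1-\omega a|,|1-\omega b|\}$, those lemmas give $\min_\omega f(\omega)=\mu^*$, attained uniquely at $\omega^*=2/(a+b)$ with $\mu^*=(b-a)/(b+a)$; the three cases yield $(\omega^*,\mu^*)=(\tfrac45,\tfrac35),(\tfrac34,\tfrac13),(\tfrac{24}{25},\tfrac{7}{25})$. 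It remains to check that adjoining $|1-\omega|$ does not change the optimum. The decisive computation is to evaluate $|1-\omega^*|$ and confirm $|1-\omega^*|<\mu^*$ in each case (namely $\tfrac15<\tfrac35$, $\tfrac14<\tfrac13$, $\tfrac1{25}<\tfrac{7}{25}$), so that $\mu_{\rm loc}(\mathcal{S}(\omega^*))=f(\omega^*)=\mu^*$. Because $f(\omega)\ge\mu^*$ for all $\omega>0$ with strict inequality for $\omega\ne\omega^*$, and $\mu_{\rm loc}\ge f$ pointwise, it follows that $\mu_{\rm loc}(\mathcal{S}(\omega))\ge\mu^*$ with equality only at $\omega^*$; this delivers both the optimal value and the uniqueness of the minimizer for all three schemes at once.

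The only genuinely scheme-dependent step is this last verification that the saddle-point eigenvalue $1$ does not bind, i.e. $|1-\omega^*|<\mu^*$; this is where the particular numerical values enter, and all three schemes satisfy it comfortably. I therefore expect the main obstacle to be conceptual rather than computational: recognizing that the range of $\sigma^*$ is contained in that of $\chi/\eta$ (Theorem~\ref{thm:bound-sigma*}) is what collapses the block problem onto the scalar one, after which the three bullet points follow as immediate specializations rather than three separate optimizations.
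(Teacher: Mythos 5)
Your proposal is correct and follows essentially the same route as the paper: the paper likewise reduces the block symbol to its three eigenvalues $1$, $\chi/\eta$, $\sigma^*$, invokes Theorem~\ref{thm:bound-sigma*} to confine $\sigma^*$ strictly inside the range of $\chi/\eta$, and then reads the optima off Lemmas~\ref{lemma:Jacobi-Laplace-opt-mu}, \ref{lemma:mass-Laplace-opt-mu}, and \ref{lemma:Vanka-Laplace-opt-mu}. If anything, your write-up supplies details the paper leaves implicit (it states the theorem without a written proof), namely the convexity bound $|1-\omega\sigma^*|\le\max\{|1-\omega a|,|1-\omega b|\}$, the verification that $|1-\omega^*|<\mu^*$ so the eigenvalue $1$ never binds, and the uniqueness of the minimizer.
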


 Theorem \ref{thm:optimal-mu} indicates that the optimal smoothing factors of the three BSR  schemes are  independent of physical parameters, $\lambda$ and $\epsilon$.

\section{Numerical experiment}\label{sec:numerical-result}
In this section, we first compute the two-grid LFA convergence factor defined in \eqref{eq:LFA-rho} using $\omega$ given in Theorem \ref{thm:optimal-mu}. Then, we present some numerical results for multigrid performance to validate the efficiency and robustness of the proposed algorithms. 
 
\subsection{LFA prediction}
In multigrid,  we consider restriction operators using  six points  for the $u$ and $v$ components of the velocity:
 \begin{equation*} 
   R_{h,u}   =    \frac{1}{8}\begin{bmatrix}
   1 &       & 1  \\
   2 &   \star   & 2   \\
  1 &      & 1  
   \end{bmatrix},
 \quad 
   R_{h,v}   =    \frac{1}{8}\begin{bmatrix}
   1 &    2  & 1  \\
    &   \star   &    \\
  1 &   2   & 1  
   \end{bmatrix},
\end{equation*}
where the $\star$ denotes the position (on the coarse grid) at which the discrete operator is applied. We take the interpolation to be  the scaled transpose of the restriction operators, that is,  $P_{h,u} =4 R^T_{h,u}$ and     $P_{h,v} =4 R^T_{h,v}$.
The simplest restriction operator  for the pressure is
 \begin{equation*} 
   R_{h,p}   =    \frac{1}{4}\begin{bmatrix}
   1 &      & 1  \\
    &   \star   &    \\
  1 &    & 1  
   \end{bmatrix},
\end{equation*}
and we consider $P_{h,p}=4R^T_{h,p}$.  Then, the restriction operator for the linear elasticity problems is
\begin{equation*}
\mathcal{R} =
\begin{pmatrix}
R_{h,u} & 0 &  0\\
0 & R_{h,v}& 0\\
 0 & 0 & R_{h,p}\\
\end{pmatrix},
\end{equation*} 
and $\mathcal{P} =4\mathcal{R}^T$. For other options of restriction and interpolation operators, we refer to the work of Niestegge and Witsch \cite{MR1049395}. 

We use $h=\frac{1}{128}$ to carry out  the two-grid LFA prediction  tests.   We test   two Lam\'{e} coefficients: 
\begin{equation*}
\epsilon =1  \quad \text{and}\quad  \lambda=\frac{2\nu}{1-2\nu},
\end{equation*} 
where $\nu = 0.45$ is in the compressible elasticity case and $\nu = 0.4999999$ is in the incompressible elasticity case. Tables \ref{tab:LFA-results-JBSR},
\ref{tab:LFA-results-QBSR}, and \ref{tab:LFA-results-VBSR} show the two-grid LFA convergence factor, $\rho_h(\gamma)$ (see \eqref{eq:LFA-rho}), as a function of the number of smoothing steps $\gamma$, for exact J-BSR, Q-BSR and V-BSR, respectively. We see that when $\gamma=1$, the two-grid LFA convergence factor is the same as the  optimal smoothing factor, as shown in Theorem \ref{thm:optimal-mu}, and $\rho_h(\gamma)\approx \mu^{\gamma}_{\rm opt}$, where $\mu_{\rm opt}$ is given  in Theorem \ref{thm:optimal-mu}.  All results show that the three methods are robust  to physical parameters $\epsilon$ and $\nu$.
 
 \begin{table}[H]
 \caption{Two-grid LFA convergence factors of exact J-BSR with different number of smoothing steps $\gamma$, and physical parameters,  $(\epsilon,\nu)$. }
\centering
\begin{tabular}{l cccc}
\hline
 $(\epsilon,\nu)$                                     &$\gamma=1$      &$\gamma=2$      &$\gamma=3$    & $\gamma=4$  \\ \hline
 
$(1,0.45)$                                             &0.600              &0.360              &0.216      &0.130    \\
$(1,0.4999999)$                                    &0.600              &0.360              &0.216      &0.130    \\
\hline
\end{tabular}\label{tab:LFA-results-JBSR}
\end{table}

 \begin{table}[H]
 \caption{Two-grid LFA convergence factors of exact  Q-BSR with different number of smoothing steps $\gamma$, and physical parameters,  $(\epsilon,\nu)$. }
\centering
\begin{tabular}{l cccc}
\hline
 $(\epsilon,\nu)$                                     &$\gamma=1$      &$\gamma=2$      &$\gamma=3$    & $\gamma=4$  \\ \hline
 
$(1,0.45)$                                             &0.333               &0.111               &0.037       &  0.028    \\
$(1,0.4999999)$                                   &0.333               &0.111               &0.037       &  0.028    \\
\hline
\end{tabular}\label{tab:LFA-results-QBSR}
\end{table}

 \begin{table}[H]
 \caption{Two-grid LFA convergence factors of exact  V-BSR with different number of smoothing steps $\gamma$, and physical parameters,  $(\epsilon,\nu)$. }
\centering
\begin{tabular}{l cccc}
\hline
 $(\epsilon,\nu)$                                     &$\gamma=1$      &$\gamma=2$      &$\gamma=3$    & $\gamma=4$  \\ \hline
 
$(1,0.45)$                                             &  0.280                &0.096               &0.056       &0.044     \\
$(1,0.999999)$                                     &  0.280                &0.096               &0.056       &0.044   \\
\hline
\end{tabular}\label{tab:LFA-results-VBSR}
\end{table}

\subsection{Multigrid performance}
We consider the model problems \eqref{eq:elasticity-one} and \eqref{eq:elasticity-bd} in a unit square domain \cite{rui2018locking}. The exact solution is given by
\begin{align*}
u(x,y) &= (-1+\cos(2\pi x)) \sin(2\pi y) +\frac{1}{1+\lambda} \sin(\pi x) \sin(\pi y),\\
v(x,y) &= (1-\cos(2\pi y)) \sin(2\pi x) +\frac{1}{1+\lambda} \sin(\pi x) \sin(\pi y).
\end{align*}
The source term $\boldsymbol{f}=(f^1,f^2)^{T}$ is computed via $\boldsymbol{f}=- \epsilon \Delta\boldsymbol{u} -(\lambda+ \epsilon) \nabla (\nabla\cdot\boldsymbol{u})$, and
\begin{align*}
f^1 &= \frac{(\lambda+3\epsilon) \pi^2}{1+\lambda}\sin(\pi x)\sin(\pi y) +8\pi^2\epsilon \sin(2\pi y) \cos(2\pi x)- 4\pi^2\epsilon \sin(2\pi y)-\frac{(\lambda+\epsilon)\pi^2}{1+\lambda} \cos(\pi x) \cos(\pi y),\\
f^2&= \frac{(\lambda+3\epsilon) \pi^2}{1+\lambda}\sin(\pi x)\sin(\pi y) -8\pi^2\epsilon \sin(2\pi y) \sin(2\pi x)+4\pi^2\epsilon \sin(2\pi x)-\frac{(\lambda+\epsilon)\pi^2}{1+\lambda} \cos(\pi x) \cos(\pi y).
\end{align*}
It is well known that for MAC scheme \cite{rui2018locking}, 
the displacement and the pressure have both second convergence order on uniform grids. To show the convergence order, we denote that
\begin{align*}
	&\|u-u_h\|_0 = \sqrt{ \sum_{i=1}^{N-1}\sum_{j=1}^N \Big(u(x_i,y_{j-\frac{1}{2}})-u_{i,j-\frac{1}{2}}\Big)^2 h^2}  \\
	&\|v-v_h\|_0 = \sqrt{ \sum_{i=1}^N\sum_{j=1}^{N-1} \Big(v(x_{i-\frac{1}{2}},y_i)-v_{i-\frac{1}{2},j}\Big)^2 h^2}  \\
	&\|p-p_h\|_0 = \sqrt{ \sum_{i=1}^N\sum_{j=1}^N \Big(p(x_{i-\frac{1}{2}},y_{j-\frac{1}{2}})-p_{i-\frac{1}{2},j-\frac{1}{2}}\Big)^2 h^2}.
\end{align*}
We present  the convergence results  with two pairs $(\epsilon, \nu)=(1,0.45)$ and $(\epsilon, \nu)=(1,0.4999999)$.  Figure \ref{fig: second-order-case1} shows that for these pairs, we obtain the expected second order convergence for both velocity and pressure.
 \begin{figure}[H]
\centering
\includegraphics[width=0.49\textwidth]{./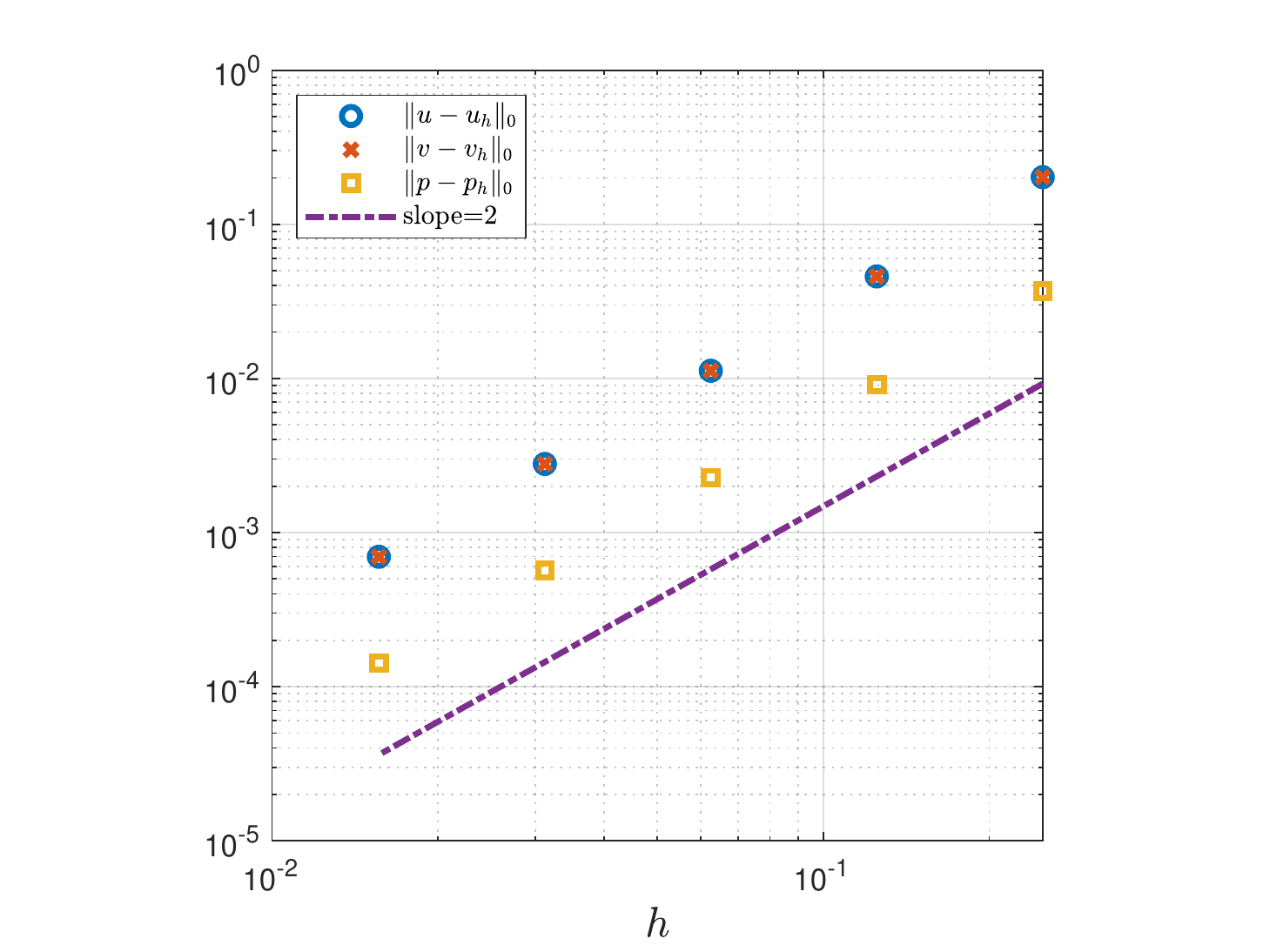}
\includegraphics[width=0.49\textwidth]{./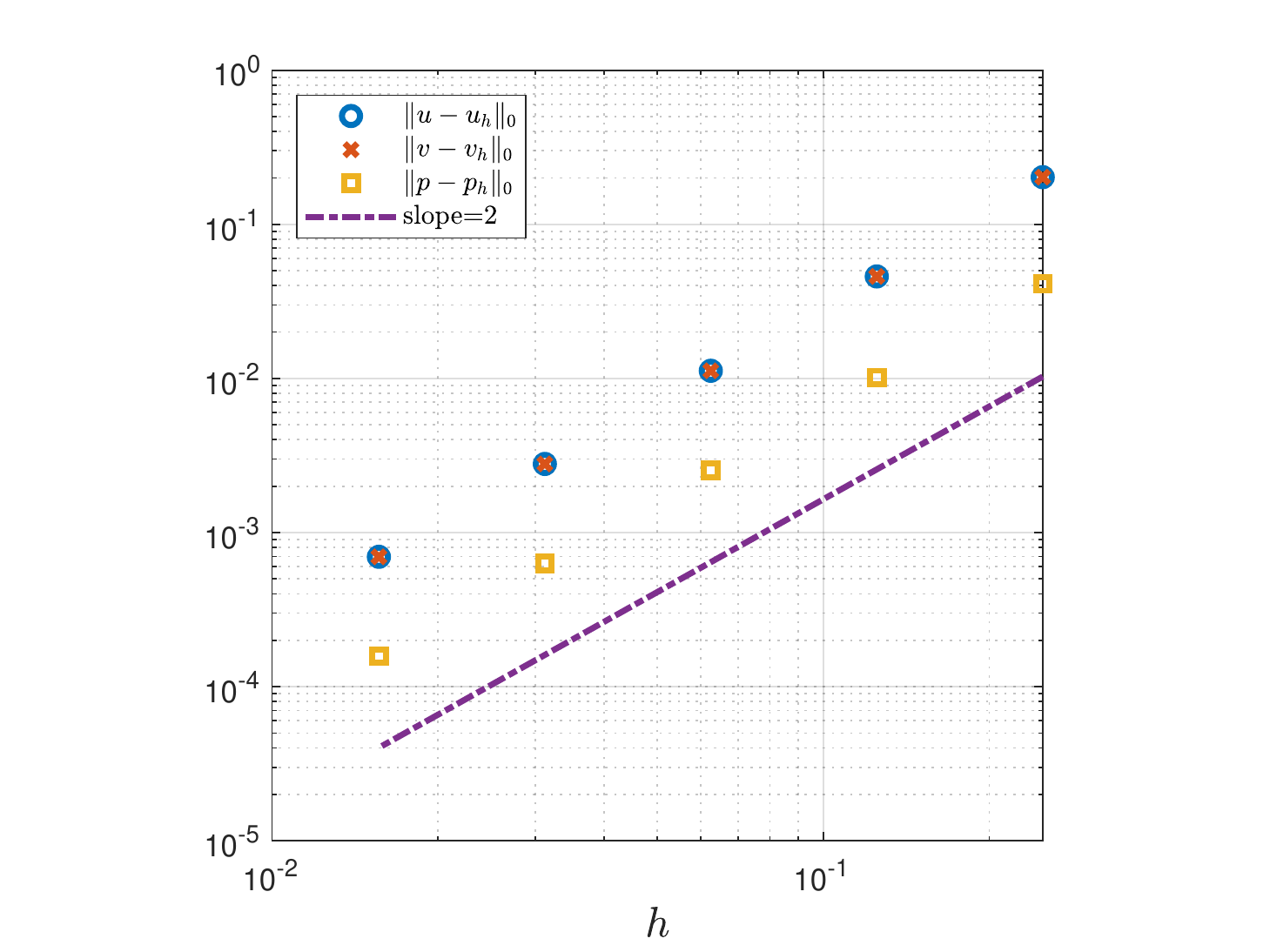}
 \caption{Convergence order. Left: $\epsilon =1$ and $\nu=0.45$. Right: $\epsilon =1$ and $\nu=0.4999999$.}\label{fig: second-order-case1}
\end{figure}
To validate our LFA predictions of the three block-structured multigrid methods, we measure the actual multigrid performance via 
\begin{equation*}
\hat{\rho}_h^{(k)} = \left(\frac{||r_k||}{||r_0||}\right)^{1/k},
\end{equation*} 
 where $r_k = b_h- \mathcal{L}_h \boldsymbol{\hat{y}}_k$  and $\boldsymbol{\hat{y}}_k$ is the $k$-th multigrid iteration.   In our test, we report  $\hat{\rho}_h^{(k)}$ with the smallest $k$ such that $||r_k||/||r_0||\leq 10^{-10}$. The initial guess is chosen randomly. The coarsest grid is a $4\times 4$ mesh.

As mentioned before, for the Schur complement system \eqref{eq:solution-of-precondtion},  we consider an inexact solve.   We apply weighted ($\omega_J$) Jacobi iteration to solve \eqref{eq:solution-of-precondtion}, where the iteration number of Jacobi relaxation is the one such that the 2-norm of the residual of the Schur complex system is less than 0.1 and it is at most three.  For J-IBSR, we use $\omega_J=0.8$.  For Q-IBSR, we use $\omega_J=0.8$.  For V-IBSR, we use $\omega_J=1$. The reason we choose these $\omega_J$ is that they give more robust multigrid methods.

\subsubsection{Two-grid  results}
We first report actual two-grid performance with finest meshsize $h=1/64$.  Table \ref{tab:TG-measured-convergence-small} presents the measured two-grid convergence factor as a function of the number of smoothing steps, $\gamma$, with $(\epsilon,\nu)=(1,0.45)$, for J-IBSR, Q-IBSR and V-IBSR. We see that the performance of 
J-IBSR and V-IBSR agree with the results  of two-grid LFA predictions of exact versions reported in Tables \ref{tab:LFA-results-JBSR} and  \ref{tab:LFA-results-VBSR}. However, we see the measured two-grid convergence factor  of Q-IBSR in Table \ref{tab:TG-measured-convergence-small} is a little larger than the two-grid LFA convergence factor of  exact Q-BSR shown in Table \ref{tab:LFA-results-QBSR}. This is not surprising, since we solve the Schur complement system inexactly.  We present the two-grid results of inexact versions for  $(\epsilon,\nu)=(1,0.4999999)$ in Table \ref{tab:TG-measured-convergence-large}, where we see  similar behavior as those in Table \ref{tab:TG-measured-convergence-small}.

\begin{table}[H]
 \caption{Measured two-grid convergence  factors of  J-IBSR, Q-IBSR and V-IBSR  with  $(\epsilon,\nu)=(1,0.45)$, and different number of smoothing steps $\gamma$. }
\centering
\begin{tabular}{l cccc}
\hline
method                                    &$\gamma=1$      &$\gamma=2$      &$\gamma=3$    & $\gamma=4$  \\ \hline
 
 J-IBSR                                   & 0.609               &0.376               &0.235        &0.152      \\
Q-IBSR                                     & 0.444            &0.232                 &0.121         &0.072     \\
V-IBSR                                   &0.267                 &0.083                &0.026       &0.019     \\
\hline
\end{tabular}\label{tab:TG-measured-convergence-small}
\end{table} 

 \begin{table}[H]
 \caption{Measured two-grid convergence  factors of  J-IBSR, Q-IBSR and V-IBSR  with  $(\epsilon,\nu)=(1,0.4999999)$, and different number of smoothing steps $\gamma$.}
\centering
\begin{tabular}{l cccc}
\hline
method                                    &$\gamma=1$      &$\gamma=2$      &$\gamma=3$    & $\gamma=4$  \\ \hline
 
 J-IBSR                                     &0.697                &0.377               &0.235       &0.151      \\
Q-IBSR                                     &0.451               &0.232              &0.119       &0.071     \\
V-IBSR                                     & 0.265            &0.082                &0.021       & 0.029    \\
\hline
\end{tabular}\label{tab:TG-measured-convergence-large}
\end{table}

\subsubsection{W(1,1)-cycle results}
The results of $\gamma=2$ in Table \ref{tab:TG-measured-convergence-large}  show  that  $0.376^{22}\approx 10^{-10}$ for J-IBSR,  $0.232^{15}\approx 10^{-10}$ for Q-IBSR, and  $0.096^{9}\approx 10^{-10}$ for V-IBSR, which means that it takes around $22, 15, 10$ iterations for J-IBSR, Q-IBSR, and V-IBSR, respectively,   to achieve the stopping criterion $||r_k||/||r_0||\leq 10^{-10}$.  

Next, we consider  W(1,1)-cycle and report the history of the convergence of  the three  BSR-type multigrid methods with $h=1/N$, where $N$ is the number of intervals in each dimension. We consider $N=32, 64, 128, 256$, and $(\epsilon,\nu)=(1,0.45)$ and  $(\epsilon,\nu)=(1,0.4999999)$.   Figure  \ref{fig: W-J-IBSR} shows the performance of J-IBSR W(1,1)-cycle with different finest meshsize  and physical parameters $(\epsilon, \nu)$. We see  robustness of J-IBSR W(1,1)-cycle multigrid, and it needs around 21 iterations to achieve the required stopping criterion, which is the same as those for  two-grid methods.  For  Q-IBSR W(1,1)-cycle, Figure \ref{fig: W-Q-IBSR} indicates that  16 or 17 iterations are enough to achieve the stopping criterion, which agrees with the two-grid results presented in Tables \ref{tab:TG-measured-convergence-small} and \ref{tab:TG-measured-convergence-large}. For V-IBSR W(1,1)-cycle, it needs only around 10 iterations to reach the stopping criterion, as shown in Figure \ref{fig: W-V-IBSR}. Comparing these three relaxation schemes, V-IBSR W(1,1)-cycle is the most efficient.

\begin{figure}[H]
\centering
\includegraphics[width=0.49\textwidth]{./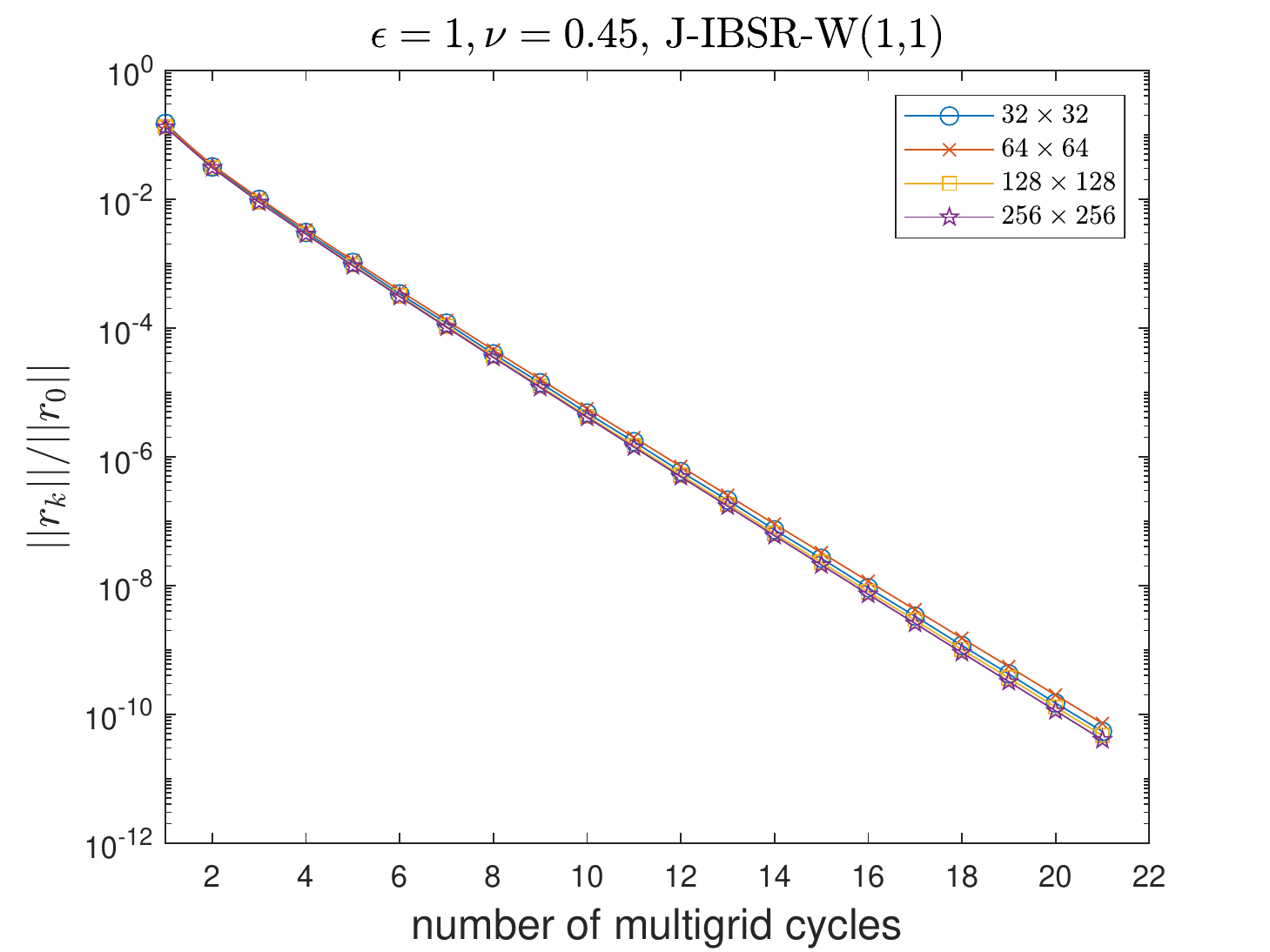}
\includegraphics[width=0.49\textwidth]{./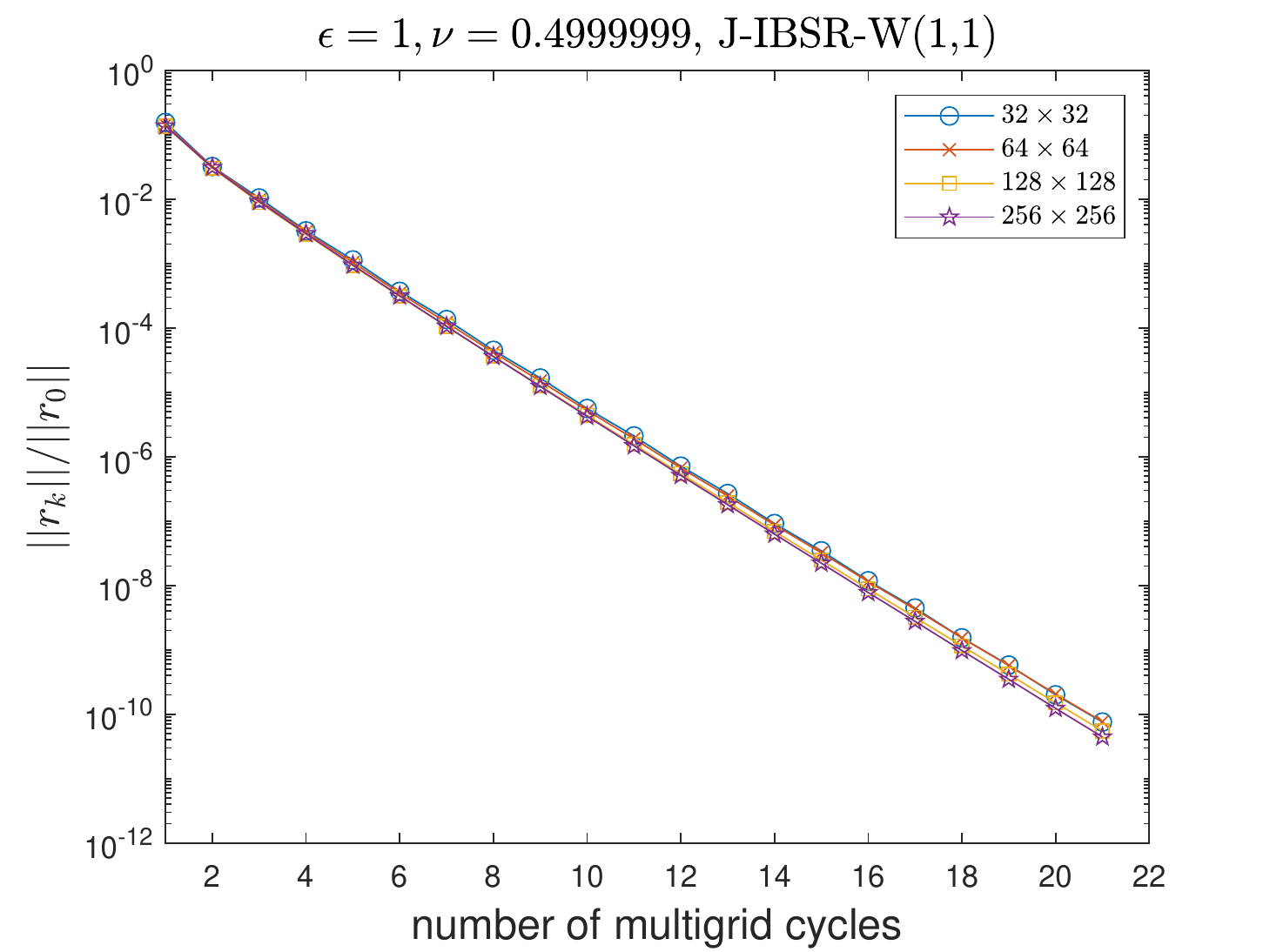}
 \caption{History of the convergence of  J-IBSR W(1,1)-multigrid method. Left: $\epsilon =1$ and $\nu=0.45$. Right: $\epsilon =1$ and $\nu=0.4999999$.}
 \label{fig: W-J-IBSR}
\end{figure}

\begin{figure}[H]
\centering
\includegraphics[width=0.49\textwidth]{./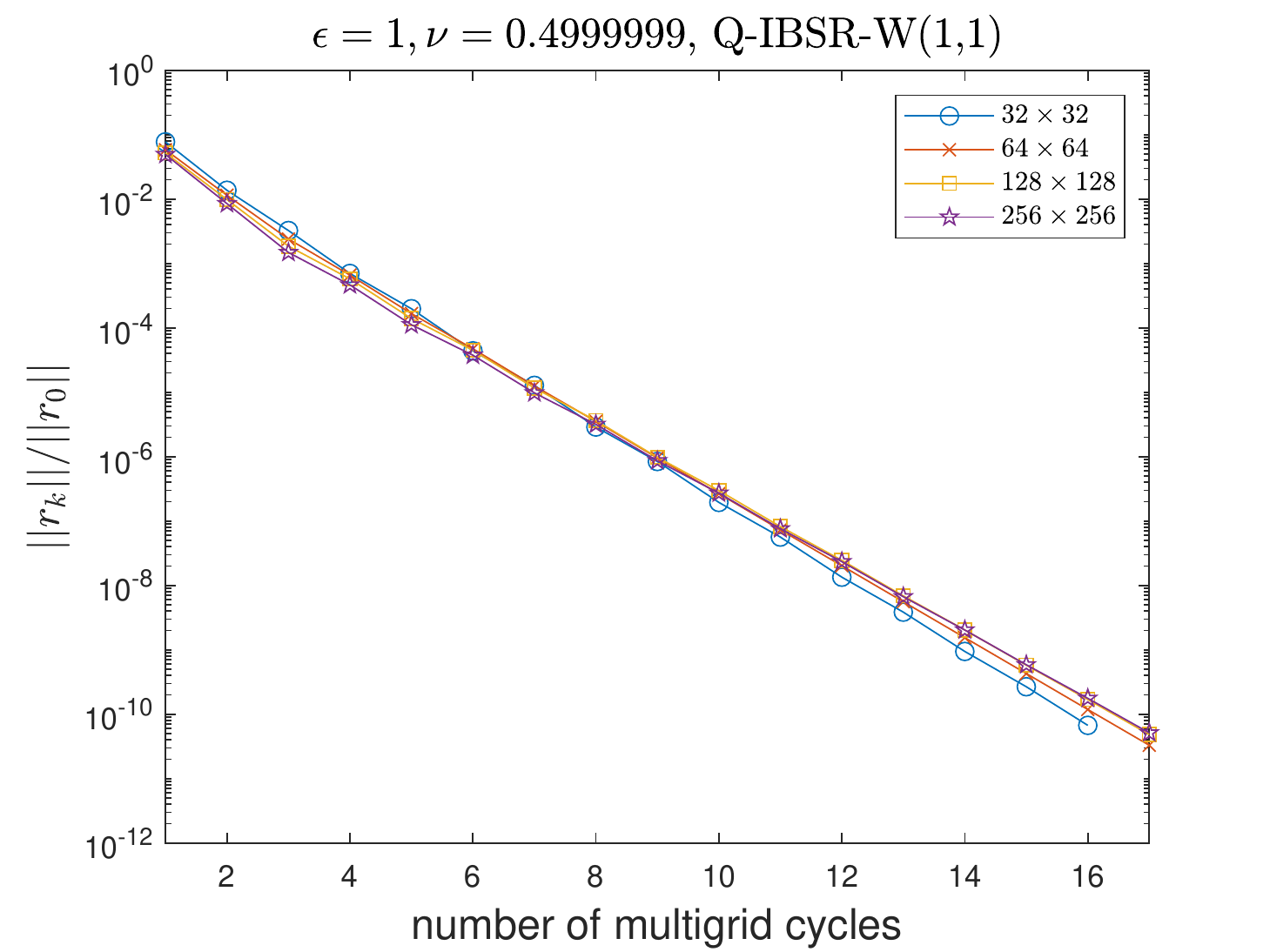}
\includegraphics[width=0.49\textwidth]{./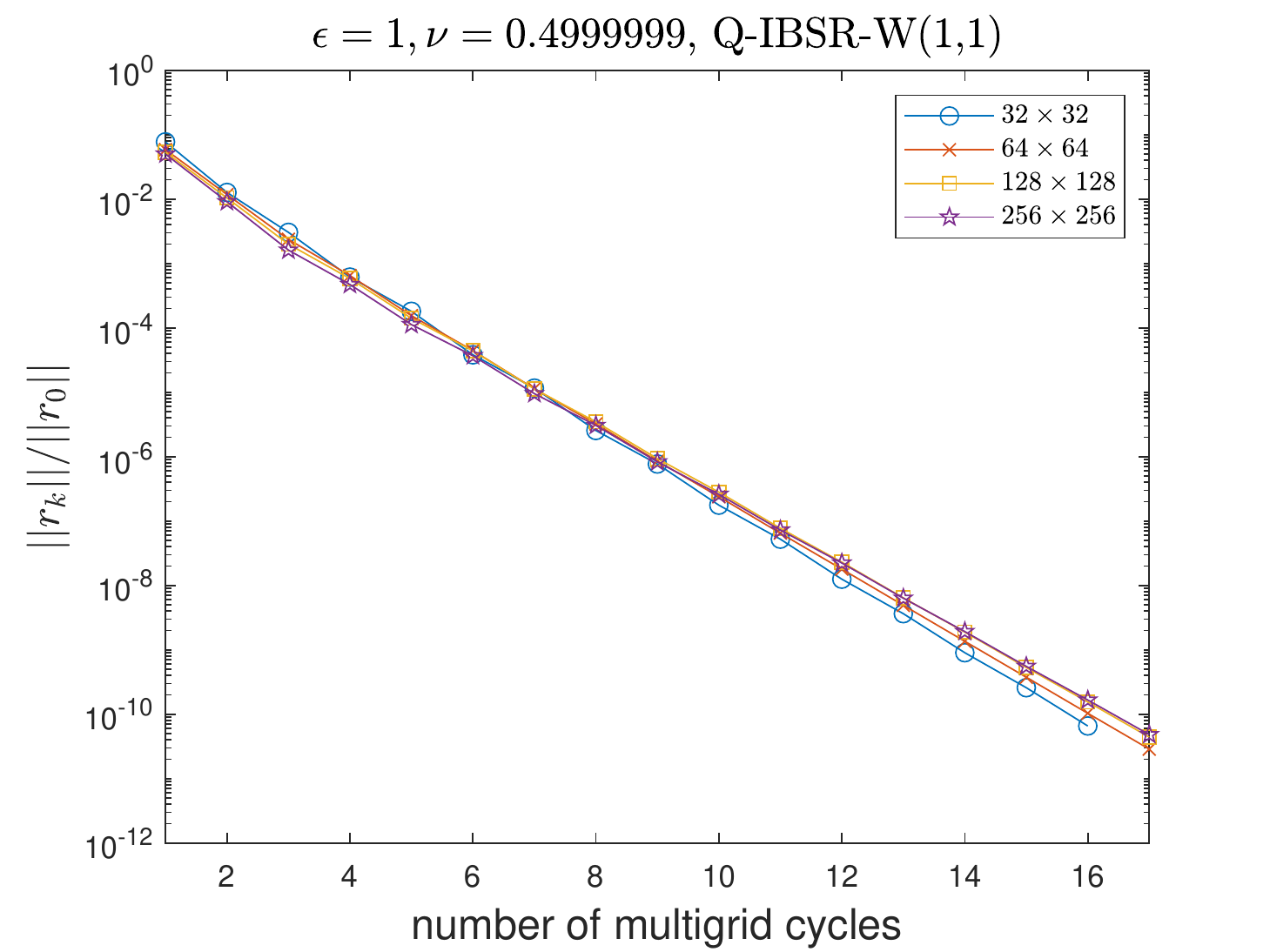}
 \caption{History of the convergence of  Q-IBSR W(1,1)-multigrid method.  Left: $\epsilon =1$ and $\nu=0.45$.  Right: $\epsilon =1$ and $\nu=0.4999999$.}\label{fig: W-Q-IBSR}
\end{figure}

\begin{figure}[H]
\centering
\includegraphics[width=0.49\textwidth]{./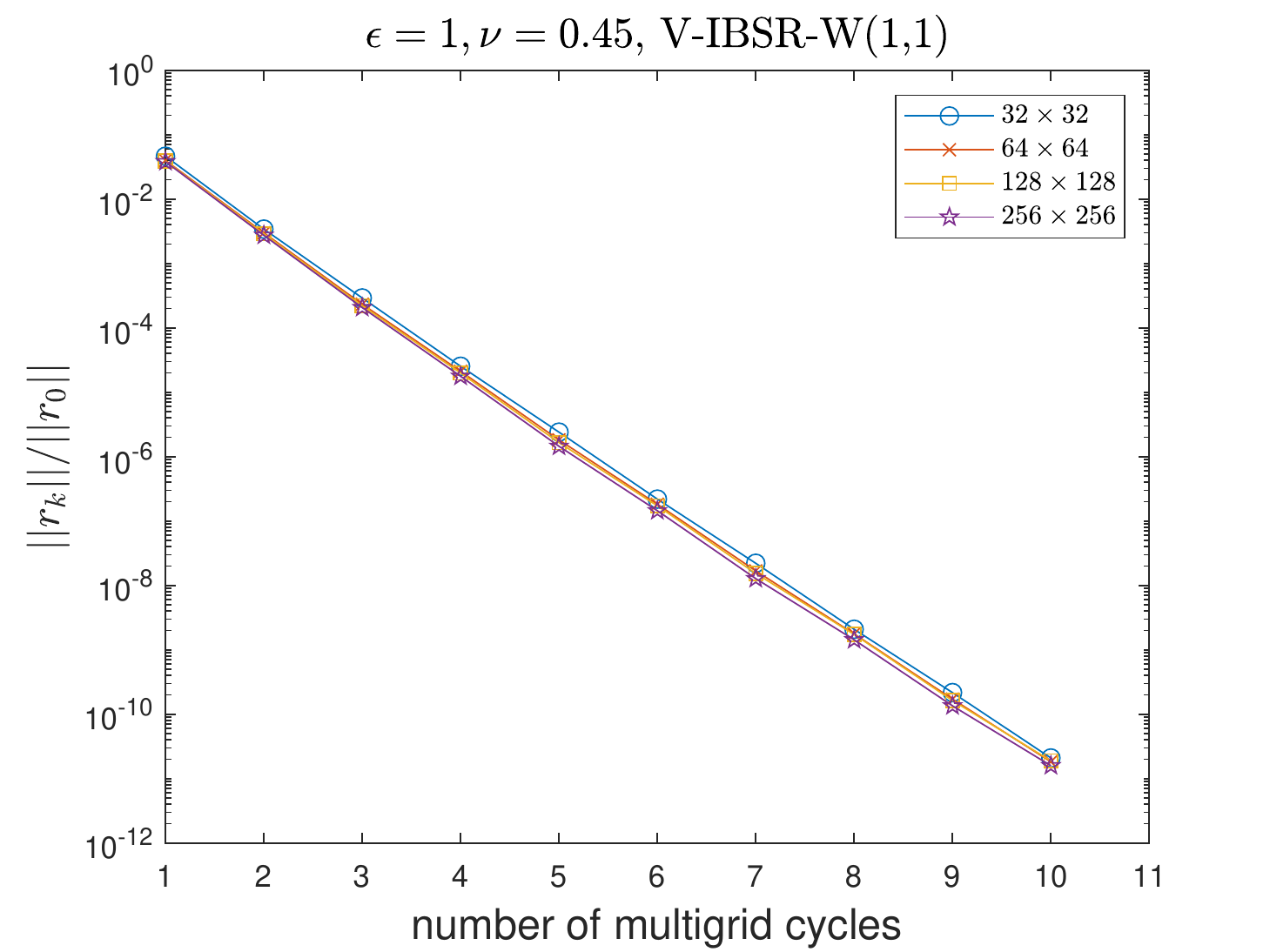}
\includegraphics[width=0.49\textwidth]{./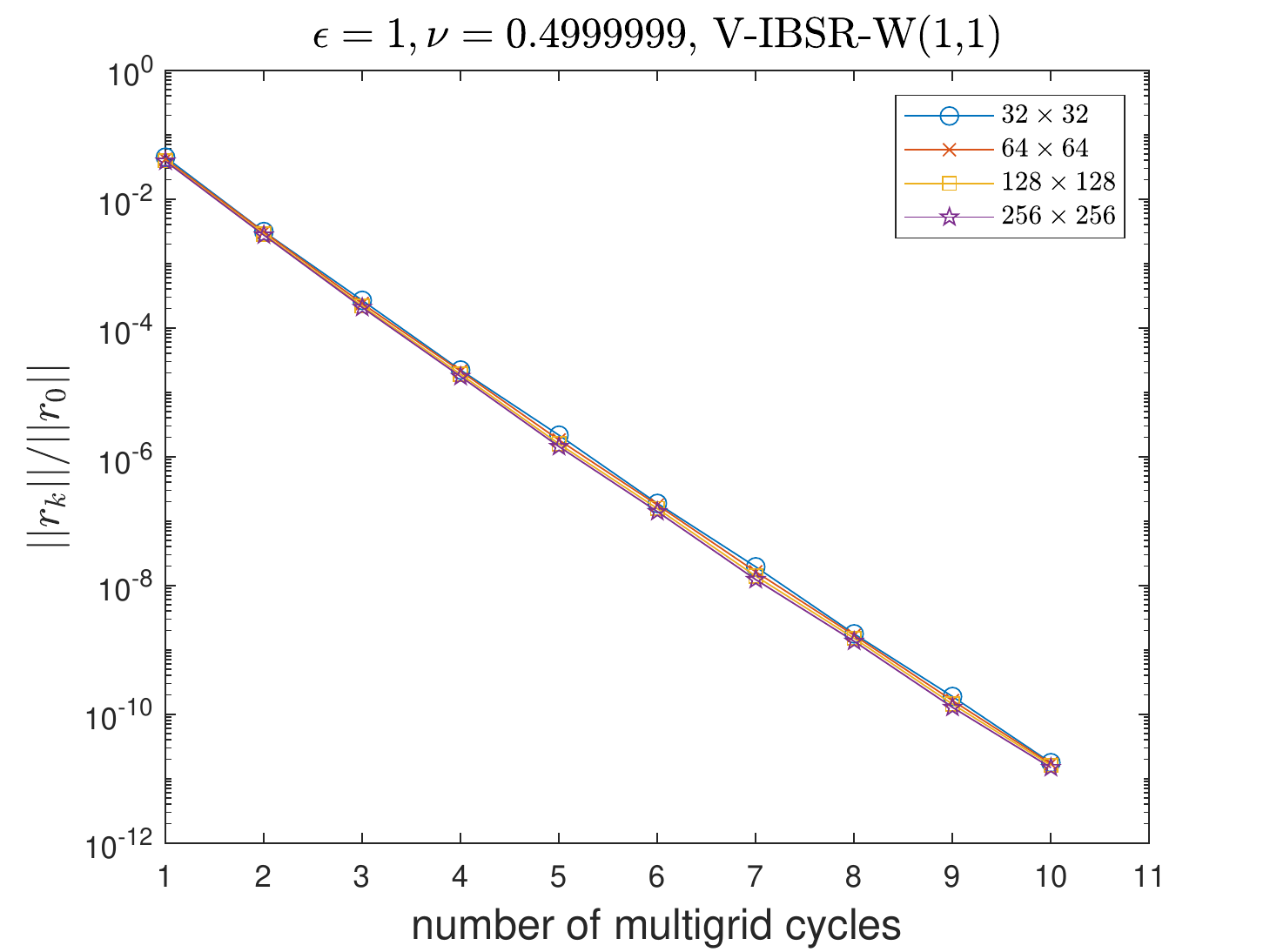}
 \caption{History of the convergence of  V-IBSR W(1,1)-multigrid method. Left: $\epsilon =1$ and $\nu=0.45$.  Right:  $\epsilon =1$ and $\nu=0.4999999$.}\label{fig: W-V-IBSR}
\end{figure}

 \subsubsection{V(1,1)-cycle results}
 In order to further explore multigrid method with these three relaxation schemes, we investigate V(1,1)-cycle multigrid for  $(\epsilon,\nu)=(1,0.45)$ and  $(\epsilon,\nu)=(1,0.4999999)$,  since V-cycle is cheaper than W-cycle.  For J-IBSR V(1,1)-cycle, we found that it takes more than one hundred iterations to achieve the stopping criterion for $h=1/32$, and it is divergent for $N=64, 128, 256$.  We tried  J-IBSR V(2,2)-cycle, and  it works,  see  Figure  \ref{fig: V-J-IBSR}.  However, with increasing the size of meshgrid, we see an increase in the number of iterations. Thus, J-IBSR V-cycle multigrid is not recommended.  For Q-IBSR V(1,1)-cycle in Figure \ref{fig: V-Q-IBSR}, and V-IBSR V(1,1)-cycle in Figure \ref{fig: V-V-IBSR},  the performances are the same as those of W(1,1)-cycle.  Again, we see the robustness  of Q-IBSR V(1,1)-cycle and V-IBSR V(1,1)-cycle with respect to $h$ and physical parameters, $(\epsilon,\nu)$. Since V(1,1)-cycle is cheaper than W(1,1)-cycle,  we recommend V(1,1)-cycle. 
 
 \begin{figure}[H]
\centering
\includegraphics[width=0.49\textwidth]{./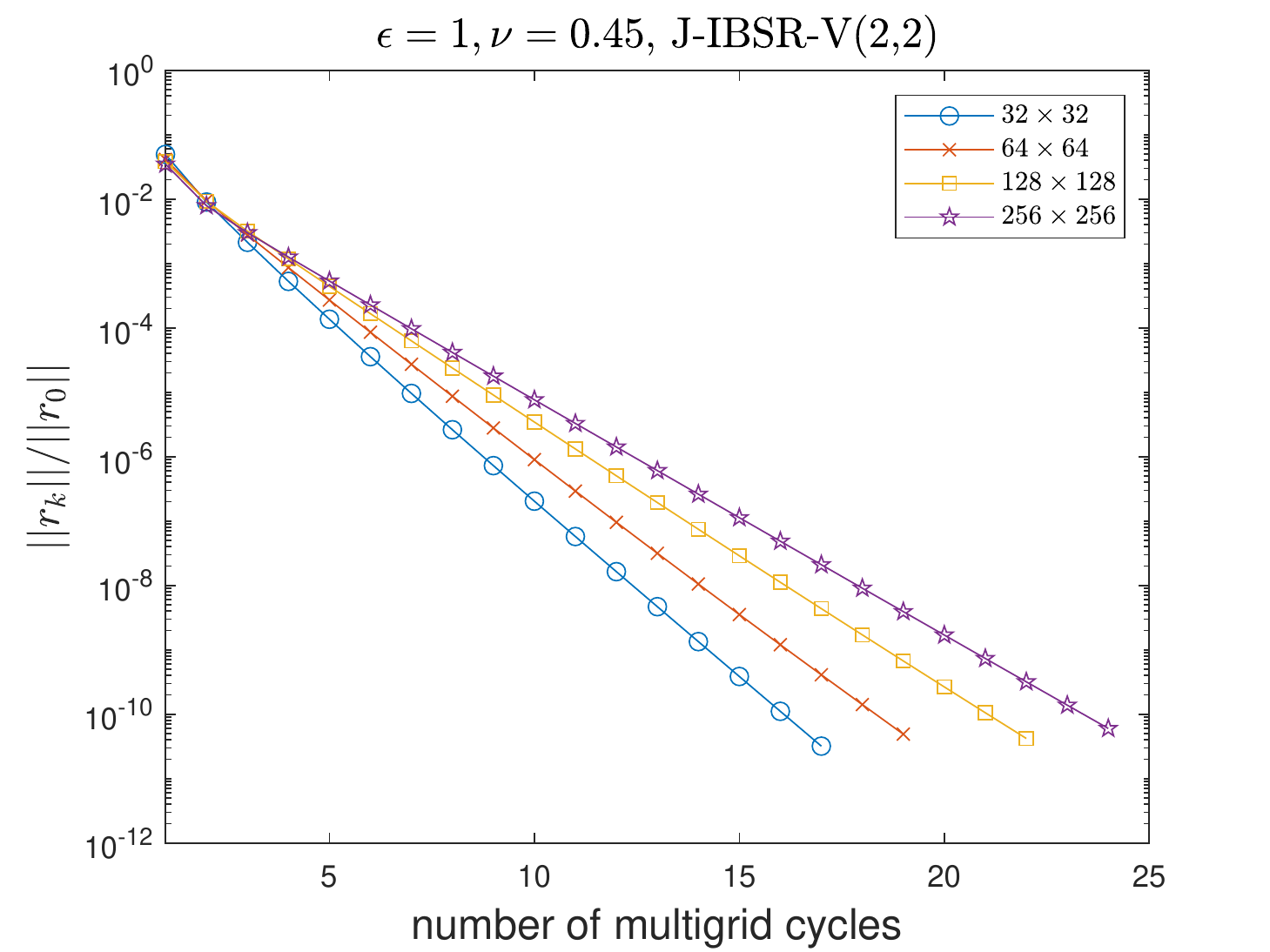}
\includegraphics[width=0.49\textwidth]{./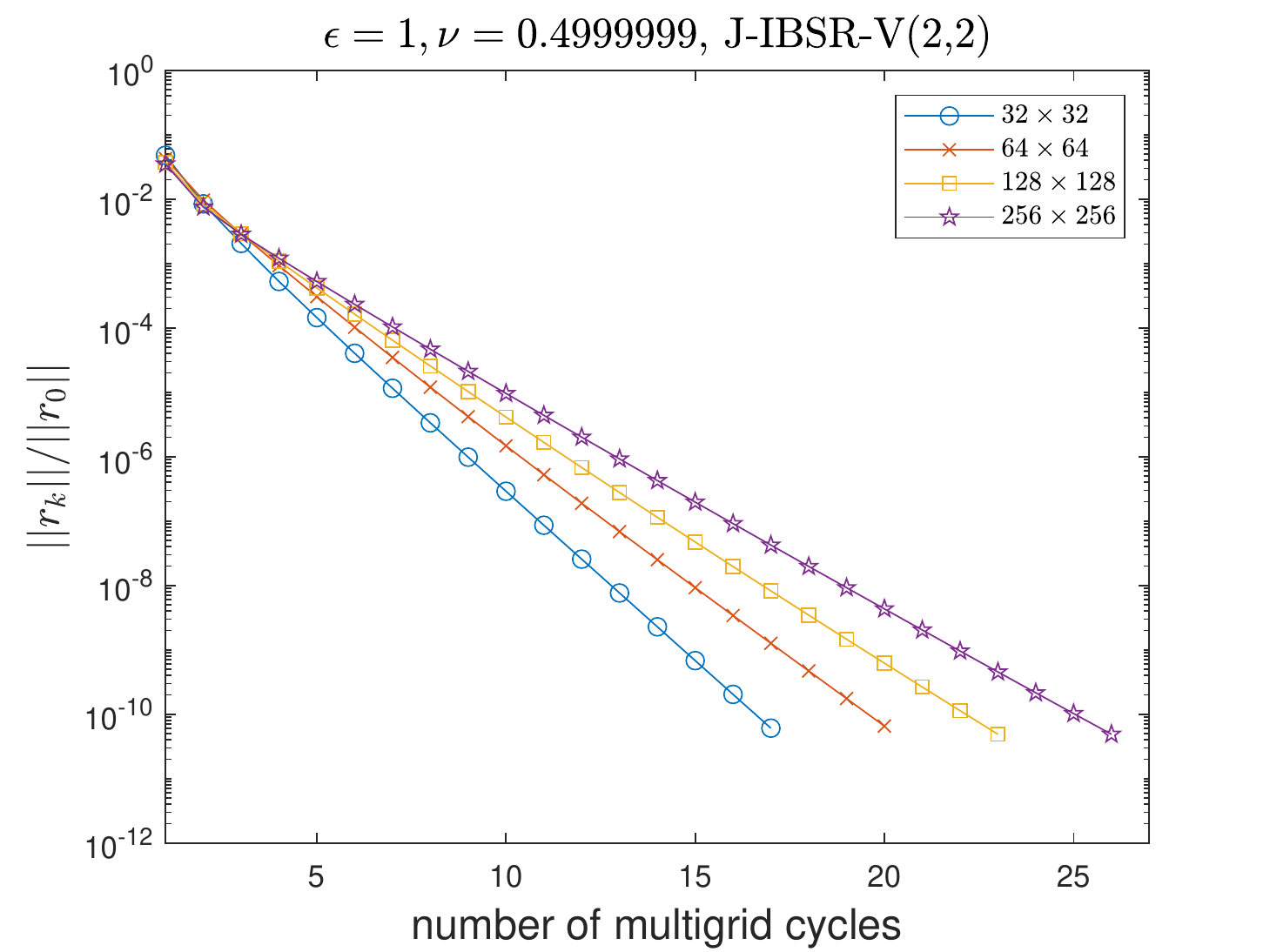}
 \caption{History of the convergence of  J-IBSR V(2,2)-multigrid method. Left:  $\epsilon =1$ and $\nu=0.45$. Right:  $\epsilon =1$ and $\nu=0.4999999$.}
 \label{fig: V-J-IBSR}
\end{figure}

 \begin{figure}[H]
\centering
\includegraphics[width=0.49\textwidth]{./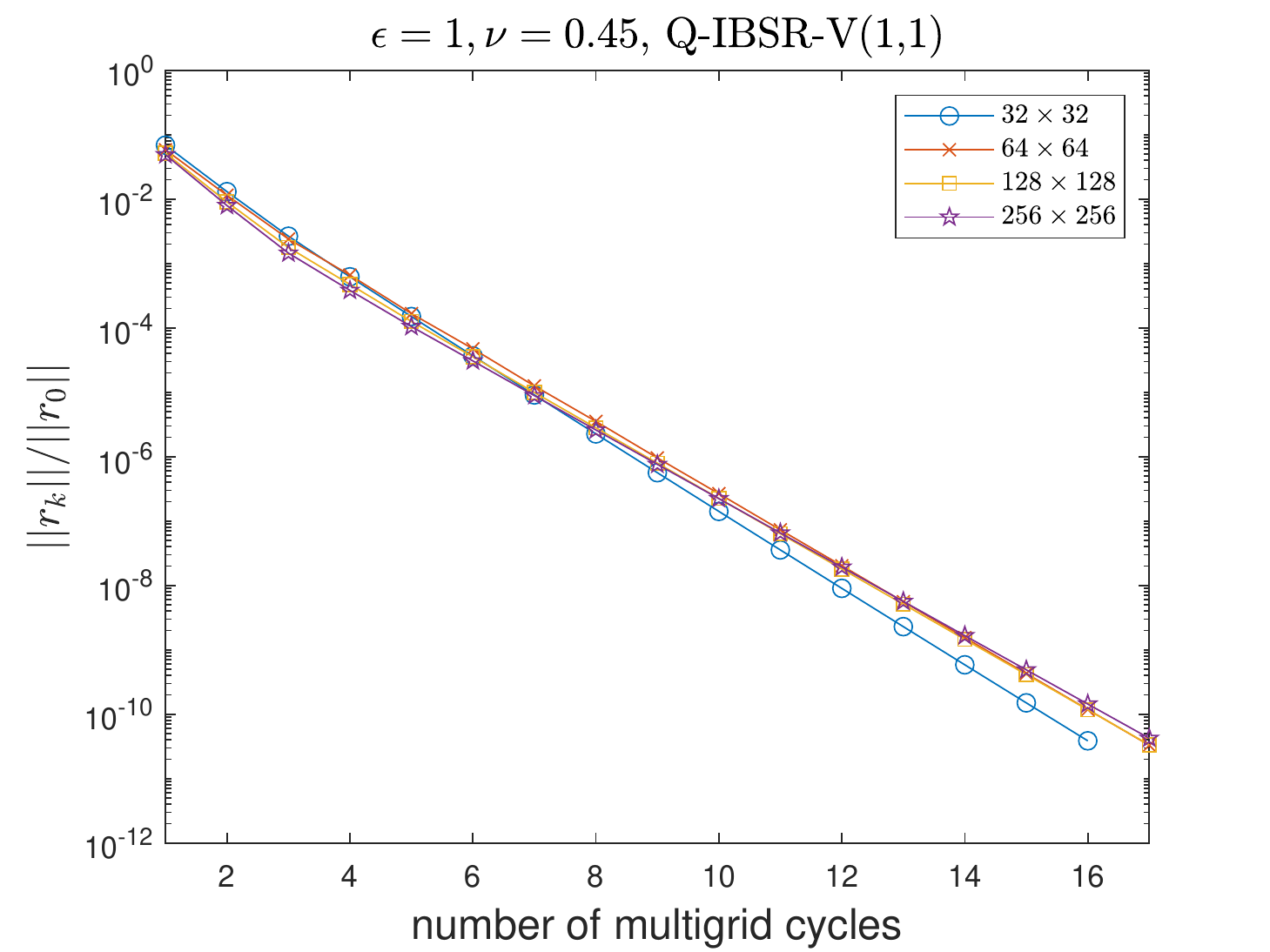}
\includegraphics[width=0.49\textwidth]{./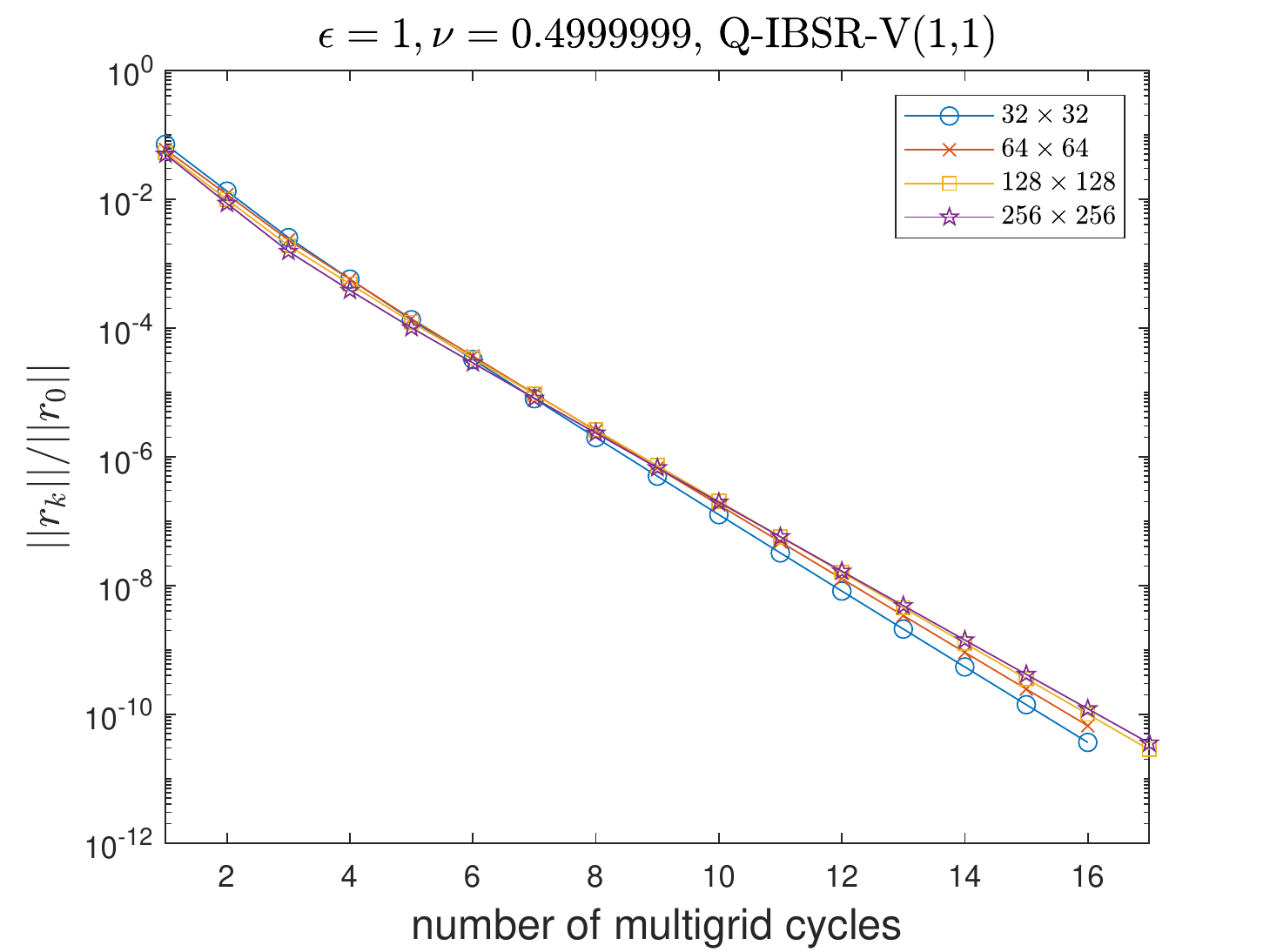}
 \caption{History of the convergence of  Q-IBSR V(1,1)-multigrid method. Left: $\epsilon =1$ and $\nu=0.45$.  Right:  $\epsilon =1$ and $\nu=0.4999999$.}
 \label{fig: V-Q-IBSR}
\end{figure}

\begin{figure}[H]
\centering
\includegraphics[width=0.49\textwidth]{./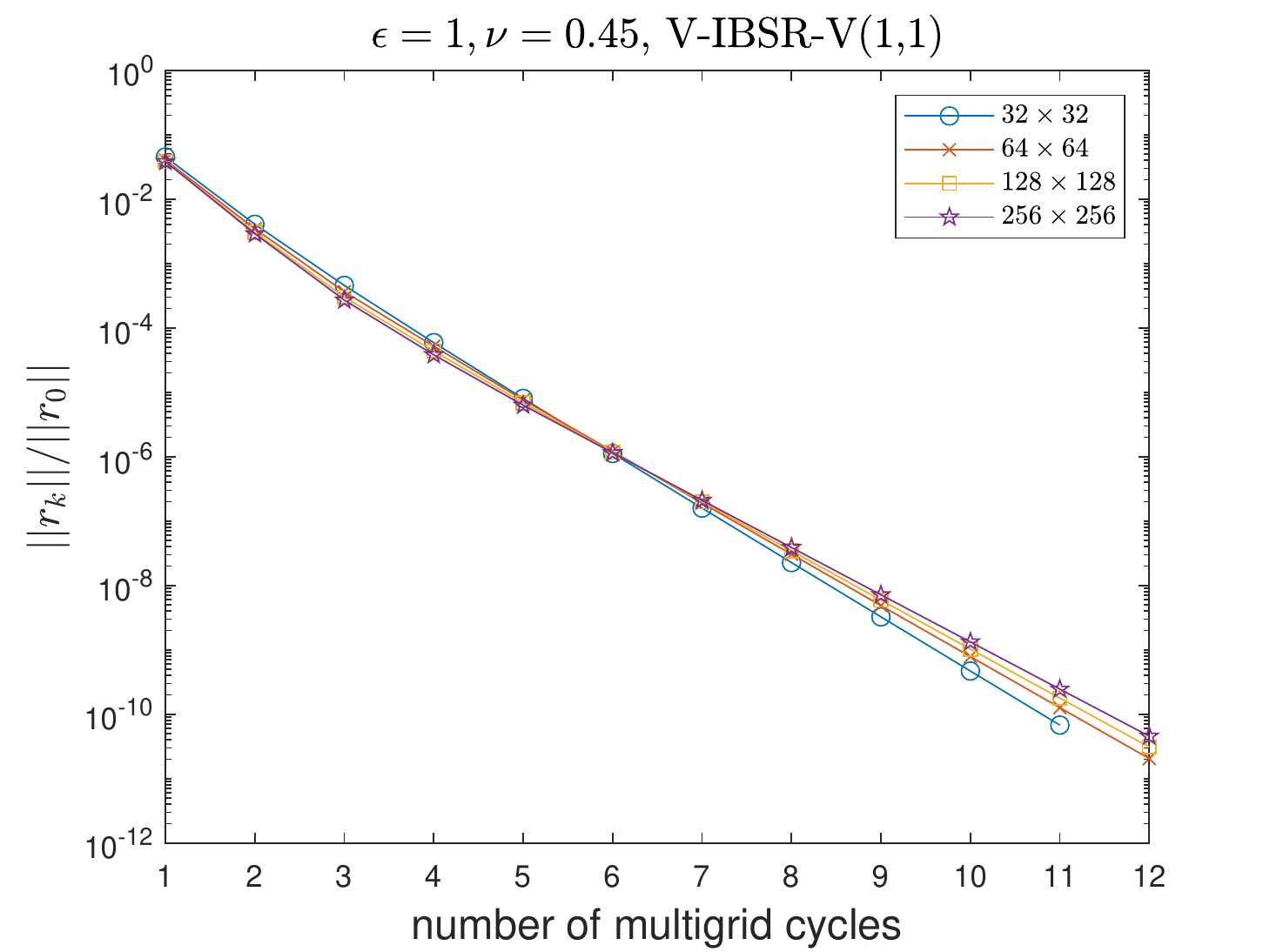}
\includegraphics[width=0.49\textwidth]{./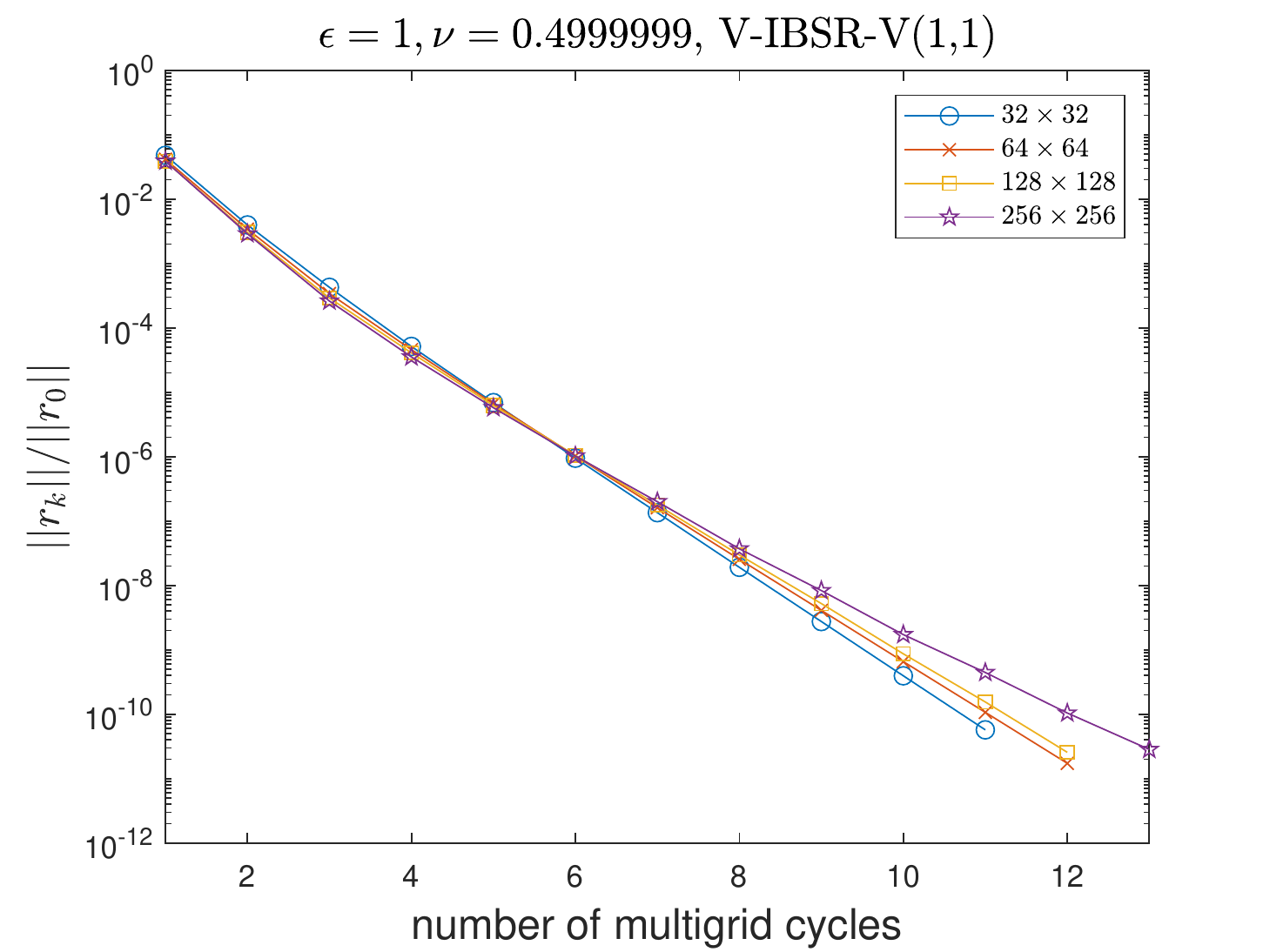}
 \caption{History of the convergence of  V-IBSR V(1,1)-multigrid method. Left: $\epsilon =1$ and $\nu=0.45$. Right:  $\epsilon =1$ and $\nu=0.4999999$.}\label{fig: V-V-IBSR}
\end{figure}

 \section{Conclusions}\label{sec:conclusion}
 
 We propose three block-structured Braess-Sarazin relaxation   schemes, named Jacobi-Braess-Sarazin relaxation, Mass-Braess-Sarazin relaxation, and 
 Vanka-Braess-Sarazin relaxation,  to solve  linear elasticity problems, where we consider a staggered finite difference discretization of a mixed form of the linear elasticity. To properly choose multigrid damping parameter and analyze multigrid performance, we consider local Fourier analysis.  From this analysis, we propose optimal damping parameters, and obtain highly efficient smoothing factors for these three relaxation schemes.  Our theoretical results show that the optimal smoothing factor is independent of  Lam\'{e} constants and meshsize,  and Vank-Braess-Sarazin relaxation is the best among  the three. 
 
In Braess-Sarazin relaxation, we need to solve a Schur complement system. To consider practical use, inexact versions of these relaxation schemes are proposed, where we apply at most three sweeps of Jacobi iteration to solve the Schur complement system.  Our numerical results show robustness to the Lam\'{e} constants.   We see that actual two-grid performance of Jacobi-Braess-Sarazin relaxation and Vanka-Braess-Sarazin relaxation match with our theoretical optimal smoothing factors, and there is a small degradation of Mass-Braess-Sarazin relaxation, which is reasonable since LFA prediction does not take into account  the influence of boundaries and of boundary conditions. For W(1,1)-cycle and V(1,1)-cycle multigrid, we see that Mass-Braess-Sarazin and  Vanka-Braess-Sarazin have the same performance as those of two-grid methods. For Jacobi-Braess-Sarazin relaxation, more pre- and post-smoothing steps are needed to obtain a good performance. Again, we see robustness of these methods to  Lam\'{e} constants and meshsize. Due to the smallest smoothing factor of Vanka-Braess-Sarazin  among the three,  V(1,1)-cycle Vanka-Braess-Sarazin is recommended.  It will be interesting to extend these relaxation schemes to MAC scheme for linear elasticity problems discretized on non-uniform grids, which will be our future work.
\bibliographystyle{plain}
\bibliography{linear_elasticitybib}


\end{document}